\def\gp#1{\langle#1\rangle}
\theoremstyle{plain}
\newtheorem{theorem}{Theorem}
\newtheorem{lemma}{Lemma}
\newtheorem{corollary}{Corollary}
\begin{document}

\title{Some ranks of modules over group rings}

\author{V.A.~Bovdi}
\author{L.A.~Kurdachenko}
\dedicatory{Dedicated to Professor Igor Subbotin on the occasion of his $70^{th}$ birthday}
\subjclass[2010]{Primary: 20C05;   20F50 Secondary: 20D25; 16D10; 16D70; 16D80}
\keywords{ module over ring; special  rank; Pr\"ufer domain;  Dedekind domain}

\address{UAEU, Al Ain, United Arab Emirates}
\email{vbovdi@gmail.com}
\address{Department of Algebra and Geometry,
School of Mathematics and Mechanics,
National University of Dnipro, Ukraine}
\email{lkurdachenko@i.ua}

\begin{abstract}
A commutative ring  $R$  has {\it finite rank}  $r$, if each ideal of  $R$  is generated  at most by $r$  elements.  A commutative ring  $R$  has the {\it $r$-generator property}, if each finitely generated ideal of  $R$  can be generated by  $r$  elements. Such rings are closely related  to Pr\"ufer domains. In the present  paper we investigate  some analogs of these concepts for modules over  group rings.
\end{abstract}

\maketitle

\section{Introduction and results}

The concept of dimension of a vector space is  one of the most important concepts of mathematics. It is the source of many important numerical invariants in many areas of mathematics.

Modules are  natural generalizations  of  vector spaces, therefore, it is  natural to extend the concept of dimension to  them. One possibility is the concept of  the $R$-rank of an  $R$-module  $A$, where $R$ is a ring.
It is  defined as the cardinality of a maximal  $R$-free subset of  $A$. This  definition turned out to be effective only for the case when  $A$ is  an $R$-torsion free module over an integral domain $R$.   Other extensions of the concept of dimension are  related to the number of generators.
Unlike in the case of vector spaces, not each  submodule of a finitely generated module is finitely generated. Moreover,   when a submodule is f.g. (finitely generated),  it can have  a minimal generating set whose  cardinality  is greater than the number of elements in the generating system of the entire module. The reasons of this phenomena  are related to  the structure of the ring $R$, over which the module is considered.
Not evry left (or right) ideal of a ring is finitely generated.  Rings having such  property are called Noetherian. However, even in notherian rings the number of generators of left ideals  are not always bounded. Following  Cohen \cite{CI1950},  a commutative ring  $R$  has {\it finite rank}  $r$, if each ideal of  $R$  has $r$  generators. Almost at the same time, the concept of  special rank arises in  group theory (Maltsev \cite{MAL1948}). Dedekind and principal ideal domains are examples of such rings. Commutative rings, having finite rank, were studied in  \cite{CI1950, GR1969, GR1972, GR1973}. The notion of  rings of finite rank was generalized by Gilmer \cite{GR1969}. Following Gilmer,  we say that a commutative ring  $R$  has an {\it $r$-generator property}, if each  finitely generated ideal of  $R$  can be generated by  $r$  elements. Examples of such rings are  the Bezout domains  with  the $1$-generator property.  Commutative rings having the $r$-generator property have been actively  studied in  \cite{GR1969, GR1972, GR1973}.  In particular, such rings proved to be closely related  to Pr\"ufer domains.In our investigation  we consider analogs of these concepts for modules over  group rings. Since the notion of  "$R$-rank" is already used in  representation theory, we  adapt  the terminology from  group theory as it  was  proposed   by Maltsev in \cite{MAL1948}.  A module  $A$ over a ring  $R$  has  {\it special  rank}  $r$  if each f.g. (finitely generated)  $R$-submodule of  $A$ can be generated by  $r$  elements and there exists at least one  f.g.  $R$-submodule  $D$  of  $A$,  which has a minimal  generating subset, consisting exactly of   $r$  elements.  The special  $R$-rank and the  classical  $R$-rank  of  $A$  are denoted  by  $\mathrm{sr}_R(A)$ and $\mathrm{rank}_R(A)$, respectively.  If  $\mathrm{sr}_R(A)$ is finite, then we write   $\mathrm{sr}_{R}(A)\in \mathbb{N}$, where $\mathbb{N}$ is the set of positive integers.

Note that in the theory of groups, the topic  related to groups of finite special $FG$-rank have played and continue to play a significant role. A huge array of articles has been devoted to this subject, in which many interesting and profound results have been obtained. An excellent  overview of the basic results obtained in this topic is contained in the  article of  Dixon \cite{Dixon}. The book \cite{DKS2017} is devoted to the presentation of many  results connected  to groups of finite special rank.

The first natural  step  in  our investigation  is  the case when $RG$ is the group algebra of a finite group $G$ over a field $R$. Our main result is the following.

\begin{theorem}\label{T:1}
Let  $FG$  be the group algebra  of a  finite group $G$  over a  field $F$ of characteristic  0. A  right  $FG$-module $A$ has a finite special $R$-rank $\mathrm{sr}_{FG}(A)=r\in \mathbb{N}$  if and only if the following conditions hold:
\begin{itemize}
\item[(i)] $A = \oplus_{  1 \leq j \leq n} H_j$ with $H_j = \oplus_{  1 \leq k \leq t(j)} D_k$, where each $D_k$  is a simple  right $FG$-submodule of  $A$  and  $D_1 \cong_{FG} D_2\cong_{FG}\cdots \cong_{FG} D_{t(j)}$;
\item[(ii)]  $t(j) \leq r$ for all  $1 \leq j \leq n$  and there exist    $s\in \mathbb{N}$  such that  $t(s) = r$;
\item[(iii)]  $n$ is bounded by  the number $\mathrm{nns}_{FG}(G)$ of pairwise non-isomorphic simple  $FG$-modules.
 \end{itemize}
In particular, if $\mathrm{sr}_{FG}(A)=r\in \mathbb{N}$, then     \quad  $\dim_F(A)\leq r\cdot |G|\cdot  \mathrm{nns}_F(G)$.
\end{theorem}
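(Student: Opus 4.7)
The plan is to invoke Maschke's theorem from the outset: since $\mathrm{char}(F)=0$ and $|G|$ is finite, $|G|$ is invertible in $F$, so $FG$ is semisimple and every $FG$-module decomposes as a direct sum of simple submodules. Grouping isomorphic simples yields a canonical decomposition $A=\bigoplus_j H_j$ into homogeneous (isotypic) components, giving (i). Since the finite-dimensional $F$-algebra $FG$ admits only finitely many isomorphism classes of simple modules (Wedderburn--Artin), the number of nonzero $H_j$ is at most $\mathrm{nns}_{FG}(G)$, giving (iii).

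For the forward implication I would next prove $t(j)\le r$ by exhibiting an explicit minimal generating subset of $H_j$ of cardinality $t(j)$. In $H_j=\bigoplus_{k=1}^{t(j)}D_k$ choose a single nonzero $e_k\in D_k$; since $D_k$ is simple, each $e_k$ generates $D_k$ as an $FG$-module, so $\{e_1,\ldots,e_{t(j)}\}$ generates $H_j$, and removing any $e_k$ leaves the strictly smaller submodule $\bigoplus_{i\ne k}D_i$. By the definition of special rank the size of this minimal generating subset is at most $r$, so $t(j)\le r$. The existence of some $s$ with $t(s)=r$ is forced by the second clause of the definition: otherwise $\max_j t(j)<r$, and no f.g.\ submodule of $A$ could realize a minimal generating subset of size $r$, contradicting the hypothesis.

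For the converse, assuming (i)--(iii), I would verify both clauses of the definition. Any f.g.\ submodule $B$ of $A$ is semisimple, so $B=\bigoplus_j D_j^{b_j}$ with $b_j\le t(j)\le r$. The Wedderburn decomposition $FG\cong\prod_i M_{n_i}(E_i)$ lets me handle the simple components independently via their central primitive idempotents, and a standard generator count then shows that $B$ can be generated by at most $r$ elements. The f.g.\ submodule realizing a minimal generating subset of size exactly $r$ is $H_s$ itself, generated as constructed above.

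Finally, the dimension bound follows from $\dim_F A=\sum_{j=1}^n t(j)\dim_F D_j$ together with $t(j)\le r$, $n\le \mathrm{nns}_F(G)$, and $\dim_F D_j\le|G|$ (since each simple $FG$-module is a quotient of the regular representation). The main obstacle I foresee is keeping bookkeeping clean when several simple types with different endomorphism division algebras $E_j$ appear, and ensuring that the notion of \emph{minimal generating subset} in the definition of $\mathrm{sr}_{FG}$ aligns with the combinatorial bound $t(j)$ coming from the isotypic decomposition.
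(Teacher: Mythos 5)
Your overall architecture matches the paper's: Maschke's theorem gives semisimplicity and the isotypic decomposition for (i), the finiteness of the number of isomorphism classes of simple $FG$-modules gives (iii), and the dimension bound is computed in exactly the same way. For the converse, where you propose Wedderburn components and central primitive idempotents, the paper instead uses an elementary Chinese-remainder-style fact (Lemma~\ref{L:3}): a direct sum of simple submodules with pairwise distinct annihilators is cyclic, so one generates $A$ by $\max_j t(j)$ elements obtained by adding, ``diagonally,'' one generator from each homogeneous component (Corollary~\ref{C:3}). That part of your plan is a legitimate alternative route, though you leave the ``standard generator count'' unspecified.

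The genuine gap is the step $t(j)\le r$ in the forward direction. You exhibit a \emph{minimal} generating subset $\{e_1,\dots,e_{t(j)}\}$ of $H_j$ and conclude $t(j)\le r$ ``by the definition of special rank.'' But the first clause of that definition only asserts that every f.g.\ submodule \emph{can} be generated by $r$ elements; a minimal generating set (one with no redundant member) need not have the minimum possible cardinality, so its size is not automatically bounded by $r$. What is required --- and what the paper supplies in Lemma~\ref{L:4} and Corollary~\ref{C:2} --- is that in a homogeneous semisimple module every nonzero cyclic submodule is simple, so that $k$ generators span a submodule of composition length at most $k$, forcing any generating set of $H_j$ to have at least $t(j)$ elements; Krull--Remak--Schmidt then fixes the count of every minimal generating subset. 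You flag precisely this alignment issue in your closing paragraph but do not resolve it, and it is not harmless: over a Wedderburn component $M_n(E)$ with $n>1$ a single element can generate up to $n$ copies of the simple module, so the idempotent bookkeeping you propose would, if pushed through, yield $\lceil t(j)/n\rceil$ generators rather than $t(j)$ and thereby undercut the very inequality you need. This is the one substantive piece of the argument that must be confronted directly rather than treated as routine.
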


If $G$  is a finite group and  $R$  is an integral domain of characteristic $0$, then a description   of  $R$-torsion-free $RG$-modules, having  finite special rank,   is given by the following.

\begin{theorem}\label{T:2}
Let  $RG$  be the group ring  of a  finite group $G$  over an integral domain $R$ of characteristic  0 and let $F$   be the  field of fractions of  $R$.
Let $A$  be a  right $RG$-module, which is  $R$-torsion free. The $RG$-module  $A$  has special rank  $\mathrm{sr}_{RG}(A)=r\in \mathbb{N}$  if and only if  the  right $FG$-module    $A \otimes_R F$  has the same  special rank  $\mathrm{sr}_{FG}(A \otimes_R F)=r$. In particular,  $\mathrm{rank}_{RG}(A) \leq \mathrm{nns}_F(G)$, where $\mathrm{nns}_{FG}(G)$ is the number of pairwise non-isomorphic simple  $FG$-modules.
\end{theorem}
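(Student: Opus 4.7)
The plan is to transfer the computation of $\mathrm{sr}_{RG}$ across the embedding $A\hookrightarrow A\otimes_R F$ (injective since $A$ is $R$-torsion-free over the domain $R$), viewing $A$ as an ``$R$-form'' inside the $FG$-module $A\otimes_R F$. The argument should proceed via the correspondence $B\leftrightarrow BF$ between finitely generated $RG$-submodules of $A$ and finitely generated $FG$-submodules of $A\otimes_R F$: given $B=\sum_i b_i\cdot RG$, one has $BF=\sum_i b_i\cdot FG$ as a f.g.\ $FG$-submodule; conversely, a f.g.\ $FG$-submodule $C=\sum_j c_j\cdot FG$ equals $BF$ for $B=\sum_j(sc_j)\cdot RG\subseteq A$, where $s\in R\setminus\{0\}$ is a common denominator chosen so that $sc_j\in A$. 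Any $RG$-generating set of $B$ maps to an $FG$-generating set of $BF$ of the same cardinality, so writing $\mu_{RG}(B)$ for the minimum size of an $RG$-generating set of $B$ (and similarly $\mu_{FG}$), one obtains $\mu_{FG}(BF)\le\mu_{RG}(B)$, yielding at once $\mathrm{sr}_{FG}(A\otimes_R F)\le\mathrm{sr}_{RG}(A)$.

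For the reverse direction the key input is Theorem~\ref{T:1}. By Maschke's theorem, $FG$ is semisimple (since $\mathrm{char}\,F=0$ and $|G|<\infty$), so $A\otimes_R F$ admits an isotypic decomposition $A\otimes_R F=\bigoplus_{j=1}^{n}H_j$ with multiplicities $t(j)\le r$ and $n\le\mathrm{nns}_F(G)$, and the minimal number of $FG$-generators of any f.g.\ $FG$-submodule equals its maximum isotypic multiplicity. Given a f.g.\ $RG$-submodule $B\subseteq A$, I would bound $\mu_{RG}(B)$ by choosing $FG$-generators of $BF$ adapted to the isotypic decomposition and then refining them into $RG$-generators of $B$ using the $R$-torsion-freeness of $B$. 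The ``achieved'' clause in the definition of special rank is verified by taking $D\subseteq A$ whose $F$-span realizes a maximum-multiplicity isotypic component of $A\otimes_R F$, forcing $\mu_{RG}(D)\ge\mu_{FG}(DF)=r$.

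The rank bound $\mathrm{rank}_{RG}(A)\le\mathrm{nns}_F(G)$ then follows from the same picture: an $RG$-free subset of $A$ remains $FG$-free in $A\otimes_R F$ by flat base change to $F$, so $\mathrm{rank}_{RG}(A)\le\mathrm{rank}_{FG}(A\otimes_R F)$. A free $FG$-module of rank $k$ embedding into $A\otimes_R F$ requires every simple $FG$-module to appear as a summand, so $A\otimes_R F$ must contain all $\mathrm{nns}_F(G)$ isomorphism types of simple $FG$-modules; by Theorem~\ref{T:1}(iii), the number of isotypic components present is at most $\mathrm{nns}_F(G)$, which forces the $FG$-free rank (hence the $RG$-free rank) to be bounded by $\mathrm{nns}_F(G)$.

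The main obstacle will be the refinement step in the second paragraph. Clearing denominators of an $FG$-generating set of $BF$ directly produces only an $RG$-submodule $B'\subseteq B$ with $B'F=BF$, and the quotient $B/B'$ may be a nonzero $R$-torsion module, so the cardinality of this $FG$-generating set does not automatically bound $\mu_{RG}(B)$. The genuine content is choosing $RG$-generators inside $B$ itself whose number matches $\mu_{FG}(BF)$; I expect this to require working isotypic-component-by-component, exploiting the rigid semisimple structure of Theorem~\ref{T:1} together with $R$-torsion-freeness, and possibly a local-to-global argument over the localizations of $R$ at its prime ideals.
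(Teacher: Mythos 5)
Your first paragraph is sound and is essentially the paper's own argument: clear denominators to write a finitely generated $FG$-submodule $C$ of $A\otimes_R F$ as $BF$ with $B$ a finitely generated $RG$-submodule of $A$, and observe that any $RG$-generating set of $B$ generates $C$ over $FG$; this yields $\mathrm{sr}_{FG}(A\otimes_R F)\le\mathrm{sr}_{RG}(A)$. The genuine gap is the reverse inequality $\mathrm{sr}_{RG}(A)\le\mathrm{sr}_{FG}(A\otimes_R F)$, which you correctly single out as the obstacle but never actually prove: the ``refinement'' of $FG$-generators of $BF$ into $RG$-generators of $B$ itself is only announced, with a hope that isotypic decompositions and localization will supply it. This is not a deferrable routine detail. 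In the stated generality the inequality is simply false: take $G=\{1\}$, $R=\mathbb{Z}[\sqrt{-5}]$ (an integral domain of characteristic $0$) and $A$ the non-principal ideal $\langle 2,\ 1+\sqrt{-5}\rangle_R$. Then $A$ is $R$-torsion-free, every finitely generated $R$-submodule of $A$ is an ideal of the Dedekind domain $R$ and hence needs at most two generators, while $A$ itself needs exactly two, so $\mathrm{sr}_R(A)=2$; yet $A\otimes_R F\cong F$ has special rank $1$. Consequently no argument based only on $R$-torsion-freeness and the semisimplicity of $FG$ can close your gap; additional hypotheses on $R$ (e.g.\ principality of ideals) would be needed. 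For fairness: the paper's own proof is explicit only for the direction you did establish and dismisses the converse with ``using similar arguments and Theorem~\ref{T:1}'', so it does not contain the missing refinement either.

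A secondary problem is your last step for the rank bound. From ``every simple $FG$-module appears in $FG$'' one gets that a free submodule $(FG)^k$ of $A\otimes_R F$ forces every isotypic multiplicity of $A\otimes_R F$ to be at least $k$, hence $k\le r$; the number of isotypic components being at most $\mathrm{nns}_F(G)$ says nothing about $k$, so the inference ``which forces the $FG$-free rank to be bounded by $\mathrm{nns}_F(G)$'' is a non sequitur. (The paper itself bounds the classical $R$-rank by $\dim_F(A\otimes_R F)\le r\cdot|G|\cdot\mathrm{nns}_F(G)$ rather than proving the displayed inequality, so this clause needs a different, and currently absent, argument in both texts.)
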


Let  $A$  be a  right module over an integral domain  $R$. If  $A$  has a finite  $R$-rank, then the  section $A/\omega(RG)A$  also has a finite  $R$-rank, where $\omega(RG)=\gp{g-1\mid g\in G}_R$ is the augmentation ideal of $RG$.  Since the action of $G$ on the  right module
$A/\omega(RG)A$ is trivial, $A/\omega(RG)A$ is an $R$-module. Therefore, it is  natural to consider modules over $RG$ having finite special ranks in which  $R$ is a commutative ring with the $r$-generator property.
An important example is  the case when $R$ is a Dedekind domain. Our next result is the following.

\begin{theorem}\label{T:3}
Let  $DG$  be the group ring   of a  finite group $G$  over a  Dedekind domain $D$, all whose capitals have characteristic  $0$. Let $F$ be the field of fractions of the ring $D$.   Let $A$ be a  right  $DG$-module which is $D$-periodic.  The  module  $A$ has  special  rank  $\mathrm{sr}_{DG}(A)=r\in \mathbb{N}$  if and only if the following conditions hold:
\begin{itemize}
\item[(i)]  $A = \oplus_{P \in \pi}A_P$  where   $\pi = \mathrm{Ass}_R(A)$ and  $A_P$  is the  $P$-component of $A$;
\item[(ii)]   $A_P = \oplus_{1 \leq j \leq n(P)} H_{j, P}$  where  $n(P) \leq \mathrm{nns}_F(G)$ and each $H_{j, P}$  is a  $G$-homogeneous  $DG$-submodule of  $A$ and it  is Artinian as  a $D$-module;
\item[(iii)]  $\Omega_{[P, 1]}(H_{j, P})$  is a direct sum of at most  $r$  simple  $DG$-submodules of  $A$;
\item[(iv)]  there exist $t\in \mathbb{N}$  and a maximal ideal  $P$  such that  $\Omega_{[P, 1]}(H_{t,P})$  is a direct sum of $r$ copies of  simple  $DG$-submodules of  $A$.
\end{itemize}
\end{theorem}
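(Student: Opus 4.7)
The plan is to reduce the structure theorem to Theorem~\ref{T:1} by analysing the first $P$-socle of each primary component of $A$. Write $r:=\mathrm{sr}_{DG}(A)$. Since $D$ is a Dedekind domain and $A$ is $D$-periodic, a standard argument splits $A$ as the direct sum of its $P$-primary components over $P\in\mathrm{Ass}_D(A)$, each such $P$ being a maximal ideal of $D$; the decomposition is automatically $G$-invariant because $G$ acts $D$-linearly, giving (i) as a $DG$-decomposition. The bound on the number of non-trivial primary summands will be obtained at the end from the bounds on the socles.

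Next, for each $P\in\mathrm{Ass}_D(A)$ I would form the first $P$-socle $S_P:=\Omega_{[P,1]}(A_P)=\{a\in A_P:aP=0\}$. It is a $DG$-submodule annihilated by $P$, hence a module over the group algebra $(D/P)G$. By the standing hypothesis, $D/P$ has characteristic $0$, so $|G|$ is invertible in $D/P$ and Maschke's theorem makes $(D/P)G$ semisimple. Every finitely generated $(D/P)G$-submodule of $S_P$ is also finitely generated as a $DG$-submodule of $A$, so $\mathrm{sr}_{(D/P)G}(S_P)\le r$. Applying Theorem~\ref{T:1} to $S_P$ over the field $D/P$ produces a $G$-isotypic decomposition $S_P=\bigoplus_{j=1}^{n(P)}T_{j,P}$, each $T_{j,P}$ being a sum of copies of a single simple $(D/P)G$-module, at most $r$ copies per component, and with $n(P)\le\mathrm{nns}_{D/P}(G)\le\mathrm{nns}_F(G)$; the last inequality reflects the general fact that enlarging a characteristic-$0$ ground field can only split simple modules further.

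The hard part is to lift this $G$-isotypic decomposition of $S_P$ to a genuine $DG$-direct sum decomposition $A_P=\bigoplus_{j=1}^{n(P)}H_{j,P}$ with $\Omega_{[P,1]}(H_{j,P})=T_{j,P}$, and to show simultaneously that each $H_{j,P}$ is Artinian over $D$. The obstruction is that $DG$ itself is not semisimple; only its quotient $(D/P)G$ is. I would handle this by passing to the localization $D_PG$ (with $D_P$ a discrete valuation ring of residue characteristic $0$) and lifting the central primitive idempotents of $(D/P)G$ through the ideal $PD_PG$, which lies in the Jacobson radical, by a Hensel-type argument; the resulting orthogonal projectors provide the required direct decomposition of $A_P$ as a $DG$-module. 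The Artinian property of each $H_{j,P}$ then follows from the structure theory of $P$-primary modules over a Dedekind domain: a $P$-primary $D$-module whose $P$-socle has finite $D/P$-dimension is a finite sum of cyclics $D/P^k$ and Pr\"ufer modules $E(D/P)$, and hence Artinian.

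Statement (iv) is obtained by picking a finitely generated $DG$-submodule $B$ of $A$ that actually requires $r$ generators: because $B$ is contained in finitely many $A_P$ and has bounded $P$-height in each, multiplication by a suitable element of an appropriate $P$ shifts a copy of $B$ into some $S_P$ without reducing its minimal number of generators, so some $T_{t,P}$ must itself have $r$ simple summands. For the converse, assuming (i)--(iv), any finitely generated $DG$-submodule projects non-trivially into only finitely many $H_{j,P}$ and the number of generators needed inside each is controlled by the $P$-socle dimension, which is at most $r$ by (iii), with equality forced by (iv); this yields $\mathrm{sr}_{DG}(A)=r$ and closes the equivalence.
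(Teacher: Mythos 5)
Your overall architecture coincides with the paper's: split $A$ into its $P$-components, analyse the socle $\Omega_{[P,1]}(A_P)$ as a module over the semisimple algebra $(D/P)G$ via Theorem~\ref{T:1}, lift the isotypic decomposition of the socle to a $DG$-decomposition of $A_P$ into homogeneous pieces, and deduce the Artinian property from finiteness of the socle (this is exactly the content of Lemma~\ref{L:6} and Corollaries~\ref{C:4}--\ref{C:8}). The step where you diverge --- lifting the central primitive idempotents of $(D/P)G$ to $D_PG$ ``by a Hensel-type argument'' --- is where the gap sits. It is true that $PD_PG\subseteq J(D_PG)$, but idempotents lift modulo the Jacobson radical only under extra hypotheses (nil ideal, or completeness), and a localization of a Dedekind domain is a non-Henselian discrete valuation ring, so the lift genuinely fails. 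Concretely, take $D=\mathbb{Q}[x]$, $P=(x^2+1)$ and $G$ cyclic of order $4$: then $(D/P)G\cong\mathbb{Q}(i)C_4$ has four central primitive idempotents, and lifting them to $D_PG$ would require a square root of $-1$ in $\mathbb{Q}[x]_{(x^2+1)}$, which does not exist. The repair is either to observe that a $P$-primary module is canonically a module over the $P$-adic \emph{completion} of $D_P$, where Hensel does apply (here $|G|$ is invertible because $char(D/P)=0$), or to use the paper's much more elementary mechanism: since $char(D/P)=0$ the additive group of a $P$-module is divisible and torsion-free, and Lemmas~\ref{L:5} and~\ref{L:7} then upgrade $D$-complements to $DG$-complements without any idempotents. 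The same example shows that your inequality $\mathrm{nns}_{D/P}(G)\le\mathrm{nns}_F(G)$ is not a consequence of ``enlarging the ground field'': $D/P$ is a residue field of $D$, not a subfield of $F$, and $\mathrm{nns}_{\mathbb{Q}(i)}(C_4)=4>3=\mathrm{nns}_{\mathbb{Q}(x)}(C_4)$. The bound your argument actually yields is $n(P)\le\mathrm{nns}_{D/P}(G)$, and no justification is offered for replacing it by $\mathrm{nns}_F(G)$.

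The derivation of (iv) is also too loose. Multiplication by $y\in P^{m-1}$ does not move ``a copy of $B$'' into the socle --- it is far from injective --- and its image may well need fewer generators than $B$. What is needed, and what the paper supplies in Lemma~\ref{L:8}, is the isomorphism $\Omega_{[P,1]}(H)\cong_{DG}H/\Phi(H)$ for a finitely generated $(G,P)$-homogeneous piece $H$, so that the minimal number of generators of $H$ equals the number of simple summands of its socle; combined with Lemma~\ref{L:3} (one generator covers several pairwise non-isomorphic homogeneous components simultaneously) and Corollary~\ref{C:2}, this gives both the upper bound $r$ for every finitely generated submodule and the existence of some $H_{t,P}$ whose socle has exactly $r$ summands. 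Your converse direction is correct in outline but rests on these same two unproved facts, so as written the proposal does not close.
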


All necessary definitions and notations are given in the next section.

\section{Preliminaries and Lemmas}
In the sequel, each module means a right  module, unless otherwise specified. Let  $FG$ be the group algebra of a finite group $G$ over  a field $F$ of characteristic  $0$. Each   $FG$-module  $A$ is semisimple  (see \cite[Corollary 5.15]{KOS2007}), so the algebra  $FG$  is semisimple too.  The number of pairwise non-isomorphic simple  $FG$-modules is denoted by    $\mathrm{nns}_F(G)$. When $FG$ is a finite dimensional  $F$-algebra,  $\mathrm{nns}_F(G)$ is a finite number (see \cite{BS1952, BS1956})  which  was calculated in  the following way:  Let $\xi$ be  a primitive $n$-th root of unity, where  $n$ is the greatest divisor of $\exp(G)$ which is not divisible by $char(F)$. Two  elements  $a, b \in G$  are  called  {\it $F$-conjugate} if  there exist  $x \in  G$  and   $m\in \mathbb{N}$  such that  $x^{-1}ax =b^m$ and  the map  $\xi\mapsto \xi^m$  is extensible to an automorphism of the field  $F(\xi)$  fixing  the subfield  $F$  elementwise.

Berman \cite{BS1956} and Witt \cite{Witt}  proved that  $\mathrm{nns}_F(G)$  is equal to the number of $F$-conjugate classes of  $F$-regular elements of   $G$ (see \cite[Theorem 42.8 p.\,306]{Curtis_Reiner}).   If  $F$ is   algebraically closed, then  Brauer
\cite{BR1956, Curtis_Reiner} shows that  $\mathrm{nns}_F(G)$  is equal to the number of the conjugate classes of  elements of   $G$ (see \cite[Theorem 40.1 p.\,283]{Curtis_Reiner}).  In both  cases   $\mathrm{nns}_F(G)$ is bounded by the number of conjugacy classes of $G$.

Let  $A$  be a module over an integral domain  $R$ and let $G$ be a finite group.  The set
\[
\mathrm{Ass}_R(A) = \{ P \;\mid P \quad  \text{ is a prime ideal of }  R  \text{  such that }\quad   \mathrm{Ann}_A(P)\not= \gp{0} \}
\]
denotes  the  {\it $R$-assassinator of}  $A$. The  $R$-submodule
\[
A_U  = \{ a \in  A \; \mid\quad  a U^n  = \gp{0} \quad \text{ for some }\quad    n\in\mathbb{N} \}
\]
of $A$ is called  the  {\it $U$-component}  of  $A$.  If  $A  =  A_U$, then  $A$  is   called  the {\it  $U$-module}.

For each $U$-module $A$ and each $k\in \mathbb{N}$ we define the following $R$-submodules of $A$:
\begin{equation}\label{FddSR}
\Omega_{[U, k]} (A) = \{ a\in  A  \mid  a U^k = \gp{0} \}\qquad  \text{and} \qquad
A_U  = \cup_{ s \in  \mathbb{N}}  \Omega_{[U, s]}(A).
\end{equation}
Obviously,\quad  $\Omega_{[U, 1]} (A) \leq  \Omega_{[U, 2]} (A) \leq  \cdots \leq A_U$.

Let  $A$ be a  $DG$-module such  that  $A$ is a  $P$-module for some maximal ideal  $P$  of a Dedekind domain $D$.  The module   $A$ is called  {\it  $(G, P)$-homogeneous} if  $A$ has an ascending series of  $DG$-submodules whose factors are isomorphic as  $DG$-modules.    A  {\it capital} of a Dedekind domain  $D$  is a factor-ring  $D/P$ in which   $P$  is a maximal ideal of  $D$.

Let $A$ and $B$  be  $R$-modules over an integral domain $R$. The notation   $A_R$ (or $(A)_R$) denotes the fact that $A$ is an $R$-module. Define the following $R$-submodule:
\[
\mathrm{Tor}_R(A) = \{ a \in  A \mid  \mathrm{Ann}_R(a)\not= \gp{0}  \}\leq A_R.
\]
We denoted the annihilator and the left annihilator of a module $A$  by $\mathrm{Ann}_R(A)$ and  $\mathrm{Ann}_R^{left}(A)$, respectively.    The $R$-submodule of $A$ generated by the elements $a_1,\ldots,a_n\in A$ is denoted  by $\gp{a_1,\ldots,a_n}_R$.  The $R$-isomorphism between $R$-modules $A$ and $B$ is denoted by $A\cong_RB$.

The module   $A$ is called  {\it periodic} as an $R$-module  (or simply,     $R$-periodic), if  $\mathrm{Tor}_R(A) = A$. In other words, $A$ is $R$-periodic if  $\mathrm{Ann}_R(a)\not= \gp{0}$  for each $a \in  A$. If  $\mathrm{Tor}_R(A) = \gp{0}$, then we say that  $A$ is  {\it $R$-torsion-free}. The intersection  $\Phi(A)$  of all maximal  $R$-submodules of  $A$ is called the   {\it Frattini submodule}  of  $A$. Of course, if  $A$ does not include proper maximal submodules, then   $\Phi(A) = A$.

In the sequel,  we use  the notions and results from the books \cite{Bovdi_book, Curtis_Reiner, DKS2017}.

We start our proof with the following.

\begin{lemma}\label{L:1}  Let  $A$  be an  $R$-module over a ring $R$ such that $\mathrm{sr}_{R}(A)$ is finite. Let $B$ and $C$   be   $R$-submodules of  $A$  such that  $B \subseteq  C$. The following conditions hold:\newline
\begin{itemize}
\item[(i)]\quad  $\mathrm{sr}_{R}(B)\leq \mathrm{sr}_{R}(C)\leq \mathrm{sr}_{R}(A)$;\qquad
\item[(ii)]\quad $\mathrm{sr}_{R}(C/B)\leq \mathrm{sr}_{R}(A/B)\leq \mathrm{sr}_{R}(A)$.
\end{itemize}
\end{lemma}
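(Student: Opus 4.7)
\medskip

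\noindent\textbf{Plan.} The plan is to prove both parts directly from the definition of special rank, which says that $\mathrm{sr}_R(X)$ is the supremum, over all finitely generated $R$-submodules $Y$ of $X$, of the minimum number of generators of $Y$ (and this supremum is attained when $\mathrm{sr}_R(X)$ is finite).

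\smallskip

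\noindent\textbf{Part (i).} First I would observe that every finitely generated $R$-submodule of $B$ is, tautologically, also a finitely generated $R$-submodule of $C$, and every finitely generated $R$-submodule of $C$ is a finitely generated $R$-submodule of $A$. Since the minimum number of generators of an $R$-module $Y$ is an intrinsic invariant of $Y$ (independent of any ambient module), the collection of numbers being maximized in $\mathrm{sr}_R(B)$ is a subset of the collection maximized in $\mathrm{sr}_R(C)$, which in turn is a subset of the collection maximized in $\mathrm{sr}_R(A)$. Taking suprema yields the chain $\mathrm{sr}_R(B)\leq \mathrm{sr}_R(C)\leq \mathrm{sr}_R(A)$, and in particular both $\mathrm{sr}_R(B)$ and $\mathrm{sr}_R(C)$ are finite.

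\smallskip

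\noindent\textbf{Part (ii).} The substantive inequality is $\mathrm{sr}_R(A/B)\leq \mathrm{sr}_R(A)$. Set $r=\mathrm{sr}_R(A)$. I would take an arbitrary finitely generated $R$-submodule $T/B$ of $A/B$, choose generators $t_1+B,\dots,t_k+B$ of $T/B$, and form the finitely generated $R$-submodule $Y=\gp{t_1,\dots,t_k}_R$ of $A$. By the definition of $r$, the module $Y$ admits a generating set $y_1,\dots,y_m$ with $m\leq r$. Then $Y+B=T$, and the cosets $y_1+B,\dots,y_m+B$ generate $T/B$, so the minimum number of generators of $T/B$ is at most $r$. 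Taking the supremum over all such $T/B$ gives $\mathrm{sr}_R(A/B)\leq r=\mathrm{sr}_R(A)$. The remaining inequality $\mathrm{sr}_R(C/B)\leq \mathrm{sr}_R(A/B)$ then follows by applying part (i) to the submodule $C/B$ of the $R$-module $A/B$.

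\smallskip

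\noindent\textbf{Expected obstacle.} There is no genuine obstacle here; the statement is foundational and reduces to a careful unwinding of the definition. The only point requiring a moment of attention is the lifting argument in part (ii), where one must be sure that a generating set of the lifted module $Y$ descends to a generating set of $T/B$ of no larger size, so that the minimum generator count of the quotient is indeed controlled by that of $Y$ (and hence by $\mathrm{sr}_R(A)$).
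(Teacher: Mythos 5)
Your proof is correct; the paper itself offers no argument for this lemma beyond the remark ``These assertions are obvious,'' and your careful unwinding of the definition (intrinsic minimal generator counts for part (i), lifting generators of $T/B$ to a finitely generated submodule $Y\leq A$ and projecting a small generating set of $Y$ back down for part (ii)) is exactly the routine verification the authors had in mind. Nothing further is needed.
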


These assertions are obvious.

\begin{lemma}\label{L:2}
Let $B$  be an $R$-submodule  of an   $R$-module  $A$  over  a Noetherian ring $R$. If   $\mathrm{sr}_{R}(B)$ and  $\mathrm{sr}_{R}(A/B)$ are finite numbers, then \quad $\mathrm{sr}_{R}(A)\leq \mathrm{sr}_{R}(B) +\mathrm{sr}_{R}(A/B)$.
\end{lemma}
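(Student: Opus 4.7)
Let $r_1 = \mathrm{sr}_R(B)$ and $r_2 = \mathrm{sr}_R(A/B)$. The plan is to show that every finitely generated $R$-submodule $M$ of $A$ admits a generating set of cardinality at most $r_1 + r_2$; once this uniform bound is established, the supremum defining $\mathrm{sr}_R(A)$ is finite and bounded by $r_1+r_2$, and is attained since only finitely many values in $\{0,1,\dots,r_1+r_2\}$ can occur as sizes of minimal generating sets of f.g.\ submodules.

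First I would fix an arbitrary finitely generated $R$-submodule $M\leq A$ and look at the short exact sequence
\[
0\to M\cap B\to M\to M/(M\cap B)\to 0.
\]
The right-hand term is $R$-isomorphic to $(M+B)/B$, which is a finitely generated $R$-submodule of $A/B$. By the definition of special rank applied to $A/B$, this quotient can be generated by at most $r_2$ elements. Lifting such a generating set to $M$ gives elements $x_1,\dots,x_{r_2}\in M$ whose images generate $M/(M\cap B)$.

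The key step where the Noetherian hypothesis enters is the treatment of $M\cap B$. Since $R$ is Noetherian and $M$ is a finitely generated $R$-module, $M$ is itself a Noetherian module; in particular, every submodule of $M$ is finitely generated. Therefore $M\cap B$ is a finitely generated $R$-submodule of $B$, and by the definition of $\mathrm{sr}_R(B)$ it admits a generating set $y_1,\dots,y_{r_1}\in M\cap B$.

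Combining the two systems, the family $\{x_1,\dots,x_{r_2},y_1,\dots,y_{r_1}\}$ generates $M$: indeed, any $m\in M$ can be written as an $R$-combination of the $x_i$ modulo $M\cap B$, and the residual element of $M\cap B$ is an $R$-combination of the $y_j$. Hence $M$ is generated by at most $r_1+r_2$ elements, which by Lemma~\ref{L:1}(applied to the f.g.\ submodule of $A$ realizing the maximum) yields $\mathrm{sr}_R(A)\leq \mathrm{sr}_R(B)+\mathrm{sr}_R(A/B)$. The only non-trivial ingredient is the passage from ``$M$ finitely generated'' to ``$M\cap B$ finitely generated,'' which is precisely where Noetherianity of $R$ is indispensable; without it the argument collapses, since one could not control $M\cap B$ via the special rank of $B$.
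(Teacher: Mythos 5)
Your proposal is correct and follows essentially the same route as the paper's own proof: both pass to the exact sequence $0\to M\cap B\to M\to M/(M\cap B)\cong (M+B)/B\to 0$, use Noetherianity of $R$ to conclude that $M\cap B$ is finitely generated, and then bound the number of generators of $M$ by the sum of the two special ranks. No gaps.
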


\begin{proof}
Let  $D$  be a f.g.   $R$-submodule of  $A$. Clearly,   $D/(D \cap B) \cong_R  (D + B)/B$  has a  finite generating subset  $\{ d_j+ (D \cap B)\mid  1 \leq j \leq m \}$ with  $m \leq \mathrm{sr}_{R}(A/B)$. Since $D$ is a f.g. module over a Noetherian ring, $D$ is a Noetherian $R$-module (see \cite[Lemma 1.1]{KOS2007}), so   $D \cap B$  is a f.g.  $R$-submodule of  $B$. Since  $\mathrm{sr}_{R}(B)$ is finite,   there exists a finite  set of generators   $\{ b_j\mid  1 \leq j \leq k \}$ of $D \cap B$ with   $k \leq \mathrm{sr}_{R}(B)$. The  subset   $\{ d_j \mid 1 \leq j \leq m \}  \cup \{ b_j\mid  1 \leq j \leq k \}$  generates  $D$  as an  $R$-submodule and  $m + k \leq \mathrm{sr}_{R}(B) +\mathrm{sr}_{R}(A/B)$.
\end{proof}

\begin{lemma}\label{L:3}
Let $A = \oplus_{1\leq j\leq n} A_j$  be an  $R$-module over a ring $R$   in which  each $A_j$  is a simple  $R$-submodule.  If $\mathrm{Ann}_R^{left}(A_j)\not= \mathrm{Ann}_R^{left}(A_i)$  for all  $i\not= j$, then  $A$  is a cyclic  $R$-module.
\end{lemma}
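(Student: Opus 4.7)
I would argue by induction on $n$. The base case $n=1$ is immediate since $A=A_1$ is simple and therefore cyclic. For the inductive step, set $A' = A_1 \oplus \cdots \oplus A_{n-1}$ and $I_j = \mathrm{Ann}_R^{left}(A_j)$. By the inductive hypothesis $A' = \gp{a'}_R$ is cyclic; projecting $a'$ onto each summand yields a decomposition $a' = a_1 + \cdots + a_{n-1}$ in which every $a_j$ generates $A_j$. Fix also a generator $a_n$ of the simple module $A_n$. The objective is to show that, after a mild adjustment, $a := a' + a_n$ generates $A$.

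\textbf{Key non-containment via primality.} Each $I_j$ is the annihilator of a simple right $R$-module, hence a right primitive two-sided ideal of $R$, and therefore a prime ideal. Using finiteness of the family $\{I_1,\ldots,I_n\}$, I would relabel the summands so that $I_n$ is minimal with respect to inclusion inside this finite set. Combined with the hypothesis $I_j\neq I_n$, this gives $I_j\not\subseteq I_n$ for every $j<n$. Consequently the product $I_1 I_2\cdots I_{n-1}$ is not contained in $I_n$: otherwise primality of $I_n$, applied inductively on the number of factors, would force some $I_j\subseteq I_n$, contradicting the minimal choice. Since $\prod_{j<n}I_j\subseteq \bigcap_{j<n}I_j$, we obtain
\[
\bigcap_{j<n}I_j\not\subseteq I_n,
\]
and we may pick $u\in \bigcap_{j<n}I_j\setminus I_n$. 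By construction $A'u=\gp{0}$ whereas $A_n u\neq \gp{0}$.

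\textbf{Producing the generator.} Since $A_n u\neq \gp{0}$ and $a_n$ generates the simple module $A_n$, there is an $r\in R$ with $a_n(ru)\neq 0$. Replacing $u$ by $ru$ (which still lies in the two-sided ideal $\bigcap_{j<n}I_j$), we may assume $a_n u\neq 0$; simplicity of $A_n$ then forces $a_n u$ to generate $A_n$. The identity $a u = (a'+a_n)u = a_n u$ gives $A_n \subseteq \gp{a}_R$; therefore $a - a_n = a'\in \gp{a}_R$, which in turn forces $A' = \gp{a'}_R\subseteq \gp{a}_R$. Hence $\gp{a}_R = A$ and the induction is complete.

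\textbf{Main obstacle.} The decisive ingredient is the non-containment $\bigcap_{j<n}I_j\not\subseteq I_n$ after the minimality reduction. The hypothesis provides only the set-theoretic statement that the $I_j$ are pairwise distinct, and the work is in upgrading this to an ideal-theoretic statement usable for constructing a generator. This upgrade rests on two facts: that the annihilator of a simple module is prime (primitive $\Rightarrow$ prime), and the minimality trick which rules out any $I_j$ sitting strictly inside the chosen $I_n$. Without primality, distinctness of the $I_j$ alone would not suffice to guarantee the non-containment.
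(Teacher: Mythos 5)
Your proof is correct, and it takes a genuinely different route from the paper's. The paper argues directly that $a_1+\cdots+a_n$ (all components nonzero) generates $A$: it asserts that each $U_j=\mathrm{Ann}_R^{left}(A_j)$ is a maximal left ideal, deduces the comaximality $U_i+U_j=R$ for $i\neq j$, and then repeatedly multiplies $a_1+\cdots+a_n$ by elements of $U_i\setminus U_1$ to kill the coordinates one at a time until $A_1\leq R(a_1+\cdots+a_n)$ is reached, and similarly for every $A_j$. You instead induct on $n$ and replace comaximality by the weaker facts that the annihilator of a simple module is a prime two-sided ideal and that a prime ideal containing a product of ideals must contain one of the factors; the minimality relabelling is exactly what makes the non-containment $\bigcap_{j<n}I_j\not\subseteq I_n$ go through, and the rest of your construction of the generator $a'+a_n$ is sound (note that $ru$ indeed stays in the two-sided ideal $\bigcap_{j<n}I_j$, as you observe). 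What your route buys is robustness at the level of generality in which the lemma is stated: for an arbitrary ring the annihilator of a simple module is primitive but need not be a maximal (left) ideal, so the comaximality $U_i+U_j=R$ used by the paper is not automatic from distinctness alone, whereas primeness always holds. What the paper's route buys, in the situations where it applies (e.g.\ the semisimple group algebras used later, where the $U_j$ really are maximal two-sided ideals), is the more explicit conclusion that \emph{every} element of $A$ with all coordinates nonzero is a generator; your induction as written only exhibits one generator of the special form $a'+a_n$, though it strengthens to the same statement if you carry ``any element with all nonzero coordinates generates'' through the induction.
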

\begin{proof}
Since each  $A_j$  is a simple   $R$-submodule, $U_j=\mathrm{Ann}_R^{left}(A_j)$  is a maximal left ideal of  $R$.
From  $U_i\not= U_j$  follows that   $U_i + U_j = R$  for all  $i\not= j$.

Let us prove that   $A=\gp{a_1 + \cdots+ a_n\mid 0\not=a_i\in A_i}_R$. Indeed, $U_1 + U_2 = R$ shows that  we can choose  $\alpha_1 \in  U_2$ such that $\alpha_1 \not\in U_1$. Clearly   $\alpha_1 a_1\not= 0$,  $\alpha_1 a_2 = 0$  and
\[
\alpha_1(a_1 +a_2 + \cdots+ a_n) =  \alpha_1 a_1 + \alpha_1 a_3 + \cdots+ \alpha_1 a_n.
\]
If  $\alpha_1 a_3 + \cdots+ \alpha_1 a_n = 0$, then  $\alpha_1 a_1 \in  R(a_1 + \cdots+ a_n)$. Since  $A_1$  is a simple  $R$-submodule, it is generated by any  non-zero element, so  $A_1 = Ra_1 \leq R(a_1 + \cdots+ a_n)$.

Assume now that  $\alpha_1 a_3 + \cdots+ \alpha_1 a_n\not= 0$. Let  $j_1$  be a smallest  positive integer (say, $j_1 = 3$), such that  $j_1 > 2$  and  $\alpha_1 a_{j_1}\not= 0$.  The  equality  $U_1 + U_3 = R$ shows that  we can choose $\alpha_2 \in  U_3$ with $\alpha_2 \not \in  U_1$. Hence  $\alpha_2\alpha_1 a_1\not= 0$,  $\alpha_2\alpha_1 a_3 = 0$  and
\[
\alpha_2(\alpha_1 a_1 + \alpha_1 a_3 + \cdots+ \alpha_1 a_n) = \alpha_2\alpha_1 a_1 + \alpha_2\alpha_1 a_4 + \cdots+ \alpha_2\alpha_1 a_n.
\]
If  $\alpha_2\alpha_1 a_4 + \cdots+ \alpha_2\alpha_1 a_n = 0$ then,  as above, we have   $A_1=Ra_1 \leq R(a_1 + \cdots+ a_n)$. Otherwise,   we can repeat finitely many times the mentioned  argument above. Finally  we obtain that  $A_1 \leq R(a_1 + \cdots+ a_n)$. Similarly, using the  same  argument,  $A_j=Ra_j \leq R(a_1 + \cdots+ a_n)$  for each  $1 \leq j \leq n$, so $A = A_1 \oplus  \cdots\oplus  A_n = R(a_1 + \cdots+ a_n)$.
 \end{proof}

\begin{lemma}\label{L:4}
Let  $A = \oplus_{1\leq j\leq n} A_j$   be an  $R$-module over  a ring $R$, in which   each $A_j$  is a simple  $R$-submodule.  If
\[
\mathrm{Ann}_R^{left}(A_1) =\mathrm{Ann}_R^{left}(A_2) =\cdots= \mathrm{Ann}_R^{left}(A_n),
\]
then $A_j\cong_R Ra$ for  each  $0\not= a\in  A$  and    $1 \leq j \leq n$.
\end{lemma}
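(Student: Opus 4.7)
My plan is to show that every nonzero element of $A$ has annihilator equal to the common maximal left ideal $U = \mathrm{Ann}_R^{left}(A_1) = \cdots = \mathrm{Ann}_R^{left}(A_n)$, so that $Ra \cong R/U$ for every $0 \neq a \in A$; then independently $A_j \cong R/U$, yielding the stated isomorphism.

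First I would observe that since each $A_j$ is a simple $R$-module, $U = \mathrm{Ann}_R^{left}(A_j)$ is a maximal left ideal of $R$. Indeed, for any $0 \neq a_j \in A_j$, simplicity gives $A_j = Ra_j$, and the homomorphism $R \to A_j$, $r \mapsto r a_j$, has kernel $\mathrm{Ann}_R(a_j)$, which is a proper left ideal containing $U$; maximality of this kernel together with $U \subseteq \mathrm{Ann}_R(a_j) \subsetneq R$ forces $\mathrm{Ann}_R(a_j) = U$. Consequently $A_j \cong_R R/U$ for every $j$.

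Next, take an arbitrary $0 \neq a \in A$ and write it in the direct sum decomposition as $a = a_{i_1} + \cdots + a_{i_k}$ with $0 \neq a_{i_s} \in A_{i_s}$. Because the sum $A = \oplus_j A_j$ is direct, $r a = 0$ if and only if $r a_{i_s} = 0$ for every $s$, that is, $\mathrm{Ann}_R(a) = \bigcap_{s=1}^{k} \mathrm{Ann}_R(a_{i_s})$. By the previous paragraph each $\mathrm{Ann}_R(a_{i_s})$ equals $U$, so $\mathrm{Ann}_R(a) = U$ as well. Hence the cyclic module $Ra$ is isomorphic to $R/U$, and combining with $A_j \cong_R R/U$ gives $Ra \cong_R A_j$ for all $j$.

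There is no real obstacle here; the only point that requires care is verifying that passing to a sum of components does not shrink the annihilator, and this is immediate from the directness of the decomposition together with the fact that all components share the same \emph{maximal} left annihilator, so the intersection of the (equal) pointwise annihilators collapses back to $U$.
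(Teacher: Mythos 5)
Your route is a compact repackaging of the paper's argument rather than a genuinely different one: the paper sets $B=Ra$ and $D=A_2\oplus\cdots\oplus A_n$, shows $B\cap D=0$ and deduces $B\cong_R A_1$ via the projection onto $A_1$, whereas you compute $\mathrm{Ann}_R(a)=\bigcap_s\mathrm{Ann}_R(a_{i_s})$ directly and identify $Ra$ with $R/U$. Both versions stand or fall on one and the same claim, namely that $\mathrm{Ann}_R(a_j)=U=\mathrm{Ann}_R^{left}(A_j)$ for every nonzero $a_j\in A_j$; the paper invokes exactly this in the form ``$\mathrm{Ann}_R^{left}(a_1)=\mathrm{Ann}_R^{left}(A_1)$ since $A_1$ is simple and $a_1\neq 0$''.

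The difficulty is that your justification of that claim is a non sequitur. You correctly observe that $\mathrm{Ann}_R(a_j)$ is a maximal left ideal (its quotient is the simple module $A_j$) and that $U\subseteq\mathrm{Ann}_R(a_j)\subsetneq R$, but maximality of the \emph{larger} ideal says nothing about ideals contained in it, so nothing here forces $U=\mathrm{Ann}_R(a_j)$. What you actually need is that $U$ itself is a maximal left ideal, i.e.\ that $R/U$ is simple as a left $R$-module --- and that is precisely the content of the claim, not a consequence of the hypotheses. When $R$ is commutative the claim is immediate, since then $\mathrm{Ann}_R(a_j)=\mathrm{Ann}_R(Ra_j)=\mathrm{Ann}_R(A_j)$ by simplicity, and your argument closes. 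For noncommutative $R$ it can fail, and with it the assertion being proved: take $R=M_2(k)$ and $A_1=A_2=k^2$ the natural simple left module, so that $U=\mathrm{Ann}_R^{left}(A_1)=\{0\}$ and the hypothesis of the lemma holds; for $a=(v,w)$ with $v,w$ linearly independent the map $r\mapsto ra$ is injective, hence $Ra\cong_R R\cong_R A_1\oplus A_2$, which is not isomorphic to $A_1$, and likewise $\mathrm{Ann}_R(v)\neq U$ for $0\neq v\in A_1$. So the gap is real, although it is inherited from, and shared with, the step the paper itself leaves unproved; to repair either argument one must add a hypothesis such as $R$ commutative, or more generally that each $A_j$ is one-dimensional over its endomorphism division ring (equivalently, that $\mathrm{Ann}_R^{left}(A_j)$ is a maximal left ideal).
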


\begin{proof} Let  $a = a_1 +\cdots + a_n\in A$  where  each $a_j \in  A_j$  for $ 1 \leq  j \leq  n$. Without loss of generality,  we can assume   $a_1 \not= 0$. Put  $B= Ra$  and  $D= A_2 \oplus  \cdots\oplus  A_n$. Then  $(B + D)/D$  is a non-zero submodule of  $A/D$.  The  isomorphism  $A/D \cong_R A_1$  shows that  $A/D$  is a simple  $R$- module. It follows that  either
$A/D = (B + D)/D$  or  $A = B + D$. Using   the  isomorphism  $(B + D)/D \cong_R B/(B \cap D)$,   we obtain that  $B/(B \cap  D) \cong_R A_1$. Suppose that  $B \cap D\not=0$  and choose  $0\not=c \in B\cap D$. Since  $c \in  B = Ra$ and   $c = ya$  for some   $y \in  R$,
$c =ya_1 + \cdots  + ya_n$. On the other hand, $c \in D$, so that $pr_1(c) = ya_1 = 0$ in which  $pr_j(A)=A_j$. It follows that   $y \in  \mathrm{Ann}_R^{left}(a_1)$.  Since  $A_1$  is a simple   $R$-submodule and  $a_1 \not= 0$,   $\mathrm{Ann}_R^{left}(a_1) = \mathrm{Ann}_R^{left}(A_1)$. This yields   $y \in \mathrm{Ann}_R^{left}(A_j)$  for each  $1 \leq  j \leq  n$. Consequently,   $ya_j  = 0$  for  all  $j$, so   $c = 0$ is  a contradiction. It  proves that  $B \cap  D = \gp{0}$  and  $B\cong_R A_1$.
\end{proof}

\begin{corollary}\label{C:1}
Let  $A = \oplus_{\lambda \in \Lambda} A_\lambda$  be an  $R$-module over a ring $R$ in which each  $A_\lambda$  is a simple  $R$-submodule, where $\Lambda$ is a linearly  ordered set.  If  $\mathrm{Ann}_R^{left}(A_\lambda) = \mathrm{Ann}_R^{left}(A_\mu)$  for all  $\lambda, \mu \in \Lambda$, then   $Ra\cong_RA_\delta$, for each  $0\not= a \in  A$ and $\delta\in \Lambda$.
\end{corollary}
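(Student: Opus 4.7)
The plan is to reduce the corollary to the finite-index case already handled by Lemma~\ref{L:4} by exploiting the fact that any element of a direct sum has finite support. The linear ordering on $\Lambda$ will play no role; it simply provides a convenient indexing.

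Fix a nonzero $a \in A$ and an arbitrary $\delta \in \Lambda$. Since $A = \oplus_{\lambda \in \Lambda} A_\lambda$, there is a finite subset $F \subseteq \Lambda$ and elements $a_\lambda \in A_\lambda$ with $a = \sum_{\lambda \in F} a_\lambda$ and at least one $a_\lambda \neq 0$. Enlarging $F$ to $F' = F \cup \{\delta\}$, I would consider the finite internal direct sum
\[
A' = \bigoplus_{\lambda \in F'} A_\lambda,
\]
which is an $R$-submodule of $A$ containing both $a$ and $A_\delta$.

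Each $A_\lambda$ with $\lambda \in F'$ is a simple $R$-submodule by hypothesis, and the equality $\mathrm{Ann}_R^{left}(A_\lambda) = \mathrm{Ann}_R^{left}(A_\mu)$ for all $\lambda, \mu \in F'$ is inherited verbatim from the hypothesis on $\Lambda$. Thus $A'$ satisfies the assumptions of Lemma~\ref{L:4}. Applying that lemma to the nonzero element $a \in A'$ yields $A_\lambda \cong_R Ra$ for every $\lambda \in F'$; specializing to $\lambda = \delta$ gives the desired isomorphism $Ra \cong_R A_\delta$.

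There is essentially no obstacle beyond recognizing that Lemma~\ref{L:4} transfers along any finite subsum of simple summands with common annihilator, and that the support trick frees us from having to enumerate $\Lambda$.
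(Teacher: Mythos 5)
Your proof is correct and is exactly the reduction the paper intends: Corollary~\ref{C:1} is stated without a separate proof precisely because the finite-support argument lets one apply Lemma~\ref{L:4} to the finite subsum $\bigoplus_{\lambda\in F'}A_\lambda$ containing $a$ and $A_\delta$. No further comment is needed.
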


\begin{corollary}\label{C:2}
Let  $A =\oplus _{1 \leq j \leq n} A_j$  be an  $R$-module over a ring $R$ in which   each $A_j$  is a simple  $R$-submodule. If $A_1 \cong_R A_2\cong_R \cdots \cong_R A_n$, then $\mathrm{sr}_{R}(A)=r\in \mathbb{N}$   if and only if  $n = r$.
\end{corollary}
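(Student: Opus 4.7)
The plan is to prove directly that $\mathrm{sr}_R(A) = n$, from which both directions of the stated equivalence follow immediately. The key reduction is via Corollary~\ref{C:1}: I would first note that the hypothesis $A_1 \cong_R \cdots \cong_R A_n$ forces $\mathrm{Ann}_R^{left}(A_i) = \mathrm{Ann}_R^{left}(A_j)$ for all $i,j$, since any $R$-isomorphism intertwines the $R$-actions, so the kernels of the actions coincide. Corollary~\ref{C:1} (applied with $\Lambda = \{1,\dots,n\}$) then gives $Ra \cong_R A_1$ for every nonzero $a \in A$; in particular, every nonzero cyclic $R$-submodule of $A$ is simple and $R$-isomorphic to $A_1$.

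Armed with this, the upper bound is a composition-length count. For any f.g.\ submodule $B = Ra_1 + \cdots + Ra_k$, each $Ra_i$ is either zero or simple, so $B$ is a sum of simple submodules, hence semisimple. As $B$ is contained in $A = A_1 \oplus \cdots \oplus A_n$, whose composition length is $n$, we obtain a decomposition $B = B_1 \oplus \cdots \oplus B_m$ with $m \leq n$ and each $B_\ell$ simple. Picking any nonzero $b_\ell \in B_\ell$ yields $B_\ell = Rb_\ell$, so $B$ can be generated by at most $n$ elements.

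For the lower bound, the natural witness is $A$ itself. Choosing $0 \neq a_j \in A_j$ gives $A = Ra_1 + \cdots + Ra_n$, so $A$ is f.g. If $A$ could be generated by fewer than $n$ elements $c_1, \dots, c_k$, then each $Rc_i$ would be simple by the first step, forcing the composition length of $A$ to be at most $k < n$, contradicting the fact that $A_1 \oplus \cdots \oplus A_n$ has length exactly $n$. Hence $\{a_1, \dots, a_n\}$ is a minimal generating set of exact size $n$, and $\mathrm{sr}_R(A) = n$. The only step in this plan requiring any verification is the identification of left annihilators of $R$-isomorphic simple modules, needed to invoke Corollary~\ref{C:1}; everything afterwards is a straightforward length count.
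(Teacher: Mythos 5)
Your proof is correct, and it reaches the conclusion by a slightly different mechanism than the paper. Both arguments share the same first step: the isomorphisms $A_1\cong_R\cdots\cong_R A_n$ give equal left annihilators, so Lemma~\ref{L:4} (equivalently Corollary~\ref{C:1}) applies and every nonzero cyclic submodule of $A$ is simple and isomorphic to $A_1$. From there the paper takes a minimal generating set $\{b_1,\dots,b_k\}$ of a f.g.\ submodule $B$, builds from it a direct decomposition $B=B_1\oplus\cdots\oplus B_k$ with each $B_j\cong_R A_1$, does the same for a second minimal generating set, and invokes the Krull--Remak--Schmidt theorem to conclude that all minimal generating sets of $B$ have the same cardinality; specializing to $B=A$ gives $r=n$. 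You instead bound everything against the composition length of $A$: any f.g.\ submodule is a sum of cyclic, hence simple, submodules, so it is semisimple of length at most $n$ and therefore generated by at most $n$ elements, while $A$ itself cannot be generated by fewer than $n$ elements since that would force its length below $n$. The two routes are of comparable depth (Jordan--H\"older versus Krull--Remak--Schmidt), but yours is a little cleaner in that it proves $\mathrm{sr}_R(A)=n$ directly and avoids having to align a direct decomposition of $B$ with a given minimal generating set --- a step the paper carries out somewhat loosely (the assertion ``$b_2\in D_1$'' is not literally true as written; only $b_2\notin B_1$ is guaranteed by minimality). The one hypothesis you flag for verification --- that $R$-isomorphic simple modules have the same left annihilator --- is indeed needed, but the paper's own proof relies on exactly the same fact in order to apply Lemma~\ref{L:4}, so you are on equal footing there.
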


\begin{proof} Let  $B=\gp{S}_R$  be a  f.g.  $R$-submodule of  $A$ with a minimal generating subset  $S = \{ b_1, \ldots, b_k \}$.  The $R$-submodule   $B_1=\gp{b_1}_R \cong_R A_1$  by Lemma \ref{L:4}. Since  $B$  is a  semisimple  $R$-module (see \cite[Corollary 4.4]{KOS2007}), there exists  an  $R$-submodule  $D_1$  such that  $B = B_1 \oplus  D_1$. Clearly $b_2\in D_1$ and $b_2\not \in B_1$, by the minimality of $S$.
The $R$-submodule $B_2=\gp{b_2}_R$ is isomorphic to $A_1$  by  Lemma \ref{L:4}. Repeating this argument finitely many times, we obtain   a  direct decomposition  $B = B_1 \oplus  B_2 \oplus  \cdots\oplus  B_k$,   where  each $B_j \cong_R A_1$.

Choosing a different minimal generating subset  $\{ d_1, \ldots, d_m \}$ of   $B$ and   repeating the same  argument, we construct  a direct decomposition  $B = D_1 \oplus  \cdots\oplus  D_m$  in which each $D_j\cong_R A_1$.  Clearly $k = m$  by the Krull-Remak-Schmidt theorem (see  \cite[Chapter 6, Theorem 1.6]{CP1991}). In particular, if  $B = A$, then   $n \leq r$ and  the above argument shows that  $n = r$. \end{proof}

\bigskip

Let  $A$  be an  $R$-module over  a ring $R$, such that $A =\oplus_{1 \leq j \leq n} A_j$  in which   each $A_j$  is a simple  $R$-submodule. The relation  "$A_i \cong_R A_j$"  is an equivalence relation on the set  $\{ A_j \mid 1 \leq j \leq n \}$ with   equivalence classes $E_1, \ldots, E_k$. The direct sum of all submodules from the  set $E_j$   is  denoted  by  $S_j$ and it is called  {\it the homogeneous component} of  $A$.

\begin{corollary}\label{C:3}
Let  $A =\oplus_{1 \leq j \leq n}A_j$  be an  $R$-module over  a ring $R$, in which   each $A_j$  is a simple  $R$-submodule and let $E_1,\ldots, E_k$ be the homogeneous components of  $A$. The number  $r=\mathrm{sr}_{R}(A)$ is finite    if and only if the following conditions hold:
\begin{itemize}
\item[(i)] $\mathrm{sr}_{R}(E_j)\leq r$ for each   $j\in\{1,\ldots,k\}$;
\item[(ii)]  there exist at least one $j\in\{1,\ldots,k\}$ such that  $\mathrm{sr}_{R}(E_j)=r$.
\end{itemize}
\end{corollary}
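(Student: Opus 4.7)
The plan is to reduce the corollary to the single identity $\mathrm{sr}_R(A)=\max_{1\leq j\leq k}\mathrm{sr}_R(E_j)$. Granting this formula, both (i) and (ii) follow at once: (i) is Lemma \ref{L:1}(i) applied to each inclusion $E_j\hookrightarrow A$, and (ii) is just the statement that the maximum is attained, which is built into the finite-$r$ case of the formula. Both the ``if'' and ``only if'' directions of the corollary are then trivial rephrasings.

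The $\leq$ direction in the formula is immediate from Lemma \ref{L:1}(i). For the reverse direction, I would fix an arbitrary finitely generated submodule $D\subseteq A$ and bound its minimal number of generators by $\max_j\mathrm{sr}_R(E_j)$. The first step is to show $D=\oplus_j(D\cap E_j)$: since $A$ is semisimple so is $D$, and any simple submodule $S\subseteq D$ lies inside the unique homogeneous component $E_j$ whose class it belongs to, because projecting $S$ to $E_i$ for $i\neq j$ would either kill $S$ or realise it as a simple submodule of $E_i$, contradicting the fact that simples in distinct homogeneous components are non-isomorphic.

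With this decomposition, each $D\cap E_j$ is a finite direct sum of pairwise isomorphic simples, so by Corollary \ref{C:2} its minimal number of generators equals the number of summands, say $t_j$. Setting $t=\max_j t_j$ and grouping the simple summands into $t$ ``columns'' (the $i$-th column being the direct sum of the $i$-th summand from each $D\cap E_j$ with $t_j\geq i$), each column is a direct sum of simples with pairwise distinct left annihilators, and hence cyclic by Lemma \ref{L:3}. This gives a generating set of $D$ of size $t\leq \max_j\mathrm{sr}_R(E_j)$. Projecting $D$ onto each $E_j$ shows conversely that $D$ cannot be generated by fewer than $t_j$ elements for any $j$, so $t$ is the precise minimum, and taking the supremum over $D$ yields the formula.

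The main obstacle is verifying that simples in distinct homogeneous components of $A$ do have distinct left annihilators, as needed to apply Lemma \ref{L:3} to the columns. This is the contrapositive of Lemma \ref{L:4}: if two simple modules shared a left annihilator, applying Lemma \ref{L:4} to their direct sum would force them to be isomorphic, contradicting their lying in distinct homogeneous classes.
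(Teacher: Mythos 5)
Your proposal is correct and follows essentially the same route as the paper: decompose $A$ (and any finitely generated submodule $D$) into homogeneous components, apply Corollary~\ref{C:2} to each component, and glue the simple summands into ``columns'' that are cyclic by Lemma~\ref{L:3}, giving $\mathrm{sr}_R(A)=\max_j\mathrm{sr}_R(E_j)$. The only difference is that you make explicit two points the paper leaves implicit --- that $D=\oplus_j(D\cap E_j)$ and that non-isomorphic simples have distinct left annihilators (via the contrapositive of Lemma~\ref{L:4}) --- which is a welcome tightening rather than a departure.
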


\begin{proof} Clearly    $A = E_1 \oplus \cdots\oplus  E_k$,  where each   $E_j = A_{j,1} \oplus  \cdots\oplus  A_{j,t(j)}$  has a special rank  $\mathrm{sr}_{R}(E_j)=t(j)\leq r$ by  Corollary \ref{C:2}.  Set  $s=\max \{ t(1), \ldots, t(k) \}$. It is easy to see  that  each  $B_i = \oplus_{1 \leq m \leq k}  A_{m,i}$ is cyclic by Lemma \ref{L:3}. It follows that  $A$ has a minimal generating subset, consisting exactly of $s$  elements. Moreover,   any  $R$-submodule  $D\leq A$  is also semisimple, so   $D = \oplus_{1 \leq l \leq q}  D_l$ in which   $q \leq n$. Furthermore,   each  simple  $R$-submodule $D_l$  is isomorphic to a submodule from the set  $\{ A_1, \ldots, A_n \}$ (see \cite[Corollary 4.4]{KOS2007}). Repeating the above arguments for  $D$, we obtain   that the  minimal generating subset of $D$  consists at most of  $s$  elements, so  $s=r$.
\end{proof}

\section{Proofs}

\begin{proof}[Proof of Theorem \ref{T:1}]
The fact $char(F) = 0$ implies that  $A$  is a semisimple  $FG$-module (see \cite[Corollary 5.15]{KOS2007}),
so  $A = \oplus_{1 \leq j \leq n} H_j$  in which   each $H_j$  is a homogeneous component of  $A$ and  $H_j$  is a direct sum of at most  $r$  simple submodules by  Corollary \ref{C:2}. As we noted above,
$n \leq \mathrm{nns}_F(G)$. Obviously, the dimension of a simple  $FG$-submodule of  $A$ is bounded by $|G|$, so   $dim_F(H_j) \leq r|G|$  and $dim_F(A) \leq r\cdot |G|\cdot \mathrm{nns}_F(G)$. The sufficiency of the conditions  of our  Theorem follows from Corollary \ref{C:3}.
\end{proof}

\begin{proof}[Proof of Theorem \ref{T:2}]
Put  $B = A \otimes_R F$. Let $C=\gp{ b_1, \ldots, b_n}_{FG}$  be an   $FG$-submodule of $B$ generated by the subset  $\{ b_1, \ldots,  b_n\}\subseteq A$. For each  $b_j$  there exists a non-zero   $y_j\in  R$  such that  $a_j = y_jb_j \in  A$.
This choice ensures that the $FG$-submodule $\gp{a_1, \ldots, a_n}_{FG}$ of  $B$ coincides with  $C$. Let  $E=\gp{ a_1, \ldots, a_n }_{RG}$  be an  $RG$-submodule.  Clearly,  $C/E$  is  an  $R$-periodic $R$-module. The fact that  $A$ has a finite special rank  $r$  implies that  $E$  has an  $RG$-generating subset  ${ e_1, \ldots, e_r}$ in which   $r \leq n$. The construction of  $E$  implies that  $C=\gp{ e_1, \ldots, e_r}_{FG}$. Thus each finitely generated  $FG$-submodule of  $B$  can be generated by  at most  $r$  elements. It follows that the  $FG$-module  $B$  has a finite special rank at most  $r$. Moreover, using  similar arguments and Theorem 1, we can prove  that  $B$  has a special rank  $r$ and   $B$  has a  finite dimension  $d$  at most  $r|G|\mathrm{nns}_F(G)$. Let  $\{v_1, \ldots, v_d \}$  be a basis of  B. Again, for each $v_j$  we can choose a non-zero  $z_j \in  R$  such that  $u_j =z_jv_j \in  A$. Since  $\{ v_1, \ldots, v_d \}$  is a basis  of  $B$,  $\{ u_1, \ldots, u_d \}$  is a maximal  independent subset of  $A$. Consequently,  the  $R$-rank of  $A$  is exactly  $d$.
\end{proof}

\begin{lemma}\label{L:5}
Let $DG$ be the group ring    of a finite group $G$ over  a Dedekind domain $D$ of characteristic  $0$. Let $A$  be a  $DG$-module which is also a $P$-module for some maximal ideal  $P$  of  $D$ with the property that  $char(D/P) = 0$. If  $B$  is  a  $DG$-submodule of  $A$  such that  $A = B \oplus  C$  for a $D$-submodule  $C$ of $A$,  then there exists a  $DG$-submodule  $K$  with the property  that  $A = B \oplus  K$.
\end{lemma}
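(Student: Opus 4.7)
The strategy is to prove a Maschke-type splitting. The key observation is that the hypothesis $\operatorname{char}(D/P)=0$ forces $|G|\notin P$, so that in the local Artinian quotient $D/P^{n}$ (for any $n\geq 1$) the image of $|G|$ is a unit; lift its inverse to an element $u_{n}\in D$, i.e.\ choose $u_{n}$ with $u_{n}|G|-1\in P^{n}$. Consequently multiplication by $|G|$ acts as a bijection on each layer $\Omega_{[P,n]}(A)$ (with inverse multiplication by $u_{n}$), and since $A=\bigcup_{n\geq 1}\Omega_{[P,n]}(A)$, multiplication by $|G|$ is a bijection of the whole module $A$. Because $|G|\in D\subseteq Z(DG)$, this map is $DG$-linear, so its inverse, which I denote $\iota\colon A\to A$, is also $DG$-linear.

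Next I would implement the classical averaging construction. Let $\pi\colon A\to B$ be the $D$-linear projection furnished by the decomposition $A=B\oplus C$, so that $\pi|_{B}=\mathrm{id}_{B}$. Define
\[
T(a)\;:=\;\sum_{g\in G}\pi(ag^{-1})\,g,\qquad a\in A.
\]
Four standard checks are then required: the image of $T$ lies in $B$ (since $\pi$ maps into $B$ and $B$ is $DG$-stable); $T$ is $D$-linear (using $D$-linearity of $\pi$ and the centrality of $D$ in $DG$); $T(ah)=T(a)h$ for every $h\in G$, via the substitution $g\mapsto gh$ in the sum; and $T|_{B}=|G|\cdot\mathrm{id}_{B}$, because for $b\in B$ one has $\pi(bg^{-1})=bg^{-1}$, so the sum collapses to $|G|\cdot b$.

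Composing, $\tilde\pi:=\iota\circ T\colon A\to B$ is a $DG$-linear retraction of the inclusion $B\hookrightarrow A$. Therefore $K:=\ker\tilde\pi$ is a $DG$-submodule of $A$ with $A=B\oplus K$, which is the required $DG$-complement.

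The only substantive obstacle is the opening step, namely establishing that multiplication by $|G|$ is invertible on $A$; once this is in place, the remainder is the classical Maschke argument for right $DG$-modules, with the $DG$-linear automorphism $\iota$ playing the role usually played by the scalar $\tfrac{1}{|G|}$. The hypothesis that $P$ is maximal is used to make $D/P^{n}$ local (hence to invert $|G|$ there), and the hypothesis $\operatorname{char}(D/P)=0$ is used precisely to ensure $|G|\notin P$.
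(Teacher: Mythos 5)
Your proof is correct, and it reaches the splitting by a route that differs in execution from the paper's, though both are at heart Maschke arguments hinging on the invertibility of $|G|$ on $A$. You invert $|G|$ ring-theoretically: since $\operatorname{char}(D/P)=0$ gives $|G|\notin P$ and $P$ is the only maximal ideal over $P^{n}$, the image of $|G|$ is a unit in $D/P^{n}$, so multiplication by $|G|$ is bijective on each layer $\Omega_{[P,n]}(A)$ and hence on $A$; you then carry out the averaging $T(a)=\sum_{g}\pi(ag^{-1})g$ explicitly and compose with the inverse $\iota$. The paper instead argues at the level of the additive group: each layer $\Omega_{[P,j+1]}(A)/\Omega_{[P,j]}(A)$ is a vector space over the characteristic-$0$ field $D/P$, so the additive group of $A$ is divisible and torsion-free; it then invokes the near-splitting form of Maschke's theorem from \cite[Theorem 5.9]{KOS2007} (which produces $K$ with $nA\leq B+K$ and $n(B\cap K)=\langle 0\rangle$ for $n=|G|$) and uses divisibility and torsion-freeness to upgrade this to $A=B\oplus K$. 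Your version is self-contained where the paper outsources the averaging to a cited theorem, and your unit argument in $D/P^{n}$ is a genuinely different (arguably sharper) way of exploiting the hypothesis $\operatorname{char}(D/P)=0$. The only point you leave implicit is that $\iota$ preserves $B$, so that $\tilde\pi=\iota\circ T$ really is a retraction onto $B$ with $\tilde\pi|_{B}=\mathrm{id}_{B}$; this is immediate because the same layer argument shows $|G|B=B$, but it deserves a sentence.
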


\begin{proof}
Put $D_P=D/P$. Consider  the  $D$-submodule   $\Omega_{[P, 1]} (A)$ (see \eqref{FddSR}).  It is easy to see that   $\mathrm{Ann}_D(a) = P$ for each   $a \in \Omega_{[P, 1]} (A)$, so    $\Omega_{[P, 1]} (A)$ can be considered  as a  $(D/P)G$-module. In particular, the additive group of  $\Omega_{[P, 1]} (A)$  is a direct sum of isomorphic copies of the additive group   $D/P$. Since  $char(D/P) = 0$, the  additive group of  $D/P$  is divisible, so the additive group of   $\Omega_{[P, 1]} (A)$  is divisible too. Also the  additive group of   $\Omega_{[P, j + 1]} (A)/ \Omega_{[P, j]}(A)$   is divisible for each  $j$ by the same reason, so the   additive group  of  $A=\cup_{j \in \mathbb{N}}   \Omega_{[P, j]} (A)$ has an ascending series  whose factors are divisible. It follows that the additive group  of  $A$ is divisible. There exists a  $DG$-submodule  $K$  such that  $nA \leq B + K$  and  $n(B \cap K) = \gp{0}$ by  \cite[Theorem 5.9]{KOS2007}. The fact that the additive group  of  $A$  is divisible implies that  $nA = A$, so
$A =     B + K$. The fact that  $char(D/P) = 0$  implies that an additive group of  $A$ is torsion-free, so that
$B \cap K = \gp{0}$ and   $A = B \oplus  K$.
\end{proof}

\begin{lemma}\label{L:6}
Let $DG$ be the group ring of a finite group $G$ over  a Dedekind domain $D$ of characteristic  0. Let $A$  be a  $DG$-module  such that $A$ is  also   a  $P$-module for some maximal ideal  $P$  of  $D$. If $char(D/P) = 0$ and $\mathrm{sr}_{DG}(A)=r\in \mathbb{N}$,  then $A$ is an Artinian $D$-module.
\end{lemma}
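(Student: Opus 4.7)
The plan is to reduce the problem to controlling the socle $\Omega_{[P,1]}(A)$ and then to invoke the structure theory of torsion modules over the localization $D_P$, which is a discrete valuation ring.

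First, I would observe that $\Omega_{[P,1]}(A)$, being annihilated by $P$, is naturally a module over the group algebra $(D/P)G$. Since $D/P$ is a field of characteristic $0$ by the capital hypothesis and $G$ is finite, Maschke's theorem makes $(D/P)G$ semisimple, so $\Omega_{[P,1]}(A)$ is a semisimple $DG$-module. Lemma~\ref{L:1} gives $\mathrm{sr}_{DG}(\Omega_{[P,1]}(A)) \leq r$, and then Corollary~\ref{C:3} splits $\Omega_{[P,1]}(A)$ as a direct sum of at most $\mathrm{nns}_{D/P}(G)$ homogeneous components, each itself a direct sum of at most $r$ pairwise isomorphic simple $(D/P)G$-submodules. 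Because every simple $(D/P)G$-module has $D/P$-dimension at most $|G|$, I conclude that $d := \dim_{D/P}\Omega_{[P,1]}(A)$ is finite.

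Next, I would view $A$ as a module over the discrete valuation ring $D_P$ with uniformizer $\pi$ generating $PD_P$; this is legitimate because every element of $D\setminus P$ acts invertibly on the $P$-torsion module $A$. Multiplication by $\pi$ gives, for each $n \geq 2$, an injective $D_P/\pi$-linear map $\Omega_{[P,n]}(A)/\Omega_{[P,n-1]}(A) \hookrightarrow \Omega_{[P,n-1]}(A)/\Omega_{[P,n-2]}(A)$ (an element killed by $\pi^n$ but not by $\pi^{n-1}$ cannot be sent into $\Omega_{[P,n-2]}(A)$), so every such factor has $D_P/\pi$-dimension at most $d$, and each layer $\Omega_{[P,n]}(A)$ has $D$-length at most $nd$; in particular each layer is Artinian.

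The decisive step is to promote these bounded layers to Artinianity of the whole $A$, and this is where I expect the main obstacle: a naive argument on a descending chain $M_1 \supseteq M_2 \supseteq \cdots$ of submodules of $A$ stabilizes only the restricted chains $M_i \cap \Omega_{[P,n]}(A)$ for each fixed $n$, with the stabilization index a priori growing with $n$. The cleanest fix is to embed $A$ into its injective hull $E$ in the category of $D_P$-modules: since $A$ is essential in $E$, the socle of $E$ equals $\Omega_{[P,1]}(A)$, still of $D_P/\pi$-dimension $d$, and the classification of injective modules over a DVR then identifies $E \cong E(D_P/\pi D_P)^{d}$, a finite direct sum of copies of the Pr\"ufer module. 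The Pr\"ufer module is Artinian, since its proper submodules form the ascending chain $\Omega_{[P,1]} \subset \Omega_{[P,2]} \subset \cdots$, so $E$ is Artinian and therefore so is its submodule $A$ -- as a $D_P$-module, equivalently as a $D$-module.
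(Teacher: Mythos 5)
Your argument is correct, and its first half coincides with the paper's: one views $\Omega_{[P,1]}(A)$ as a module over the semisimple algebra $(D/P)G$ and uses Lemma~\ref{L:1} together with Theorem~\ref{T:1} to conclude that $d=\dim_{D/P}\Omega_{[P,1]}(A)$ is finite. Where you genuinely diverge is the passage from a finite socle to Artinianity of all of $A$: the paper disposes of this in one line by citing \cite[Corollary~7.12]{KSS2008} (a $P$-module over a Dedekind domain with finite-dimensional socle is Artinian), whereas you prove that fact from scratch by localizing at $P$ and invoking the Matlis classification of injectives over the resulting discrete valuation ring --- since $A$ is $\pi$-primary and essential in its injective hull $E$, no copy of the fraction field can occur in $E$, so $E$ is a direct sum of exactly $d$ Pr\"ufer modules and hence Artinian, and so is its submodule $A$. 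This is a legitimate, self-contained replacement for the cited corollary; it costs you the injective-hull machinery but buys independence from \cite{KSS2008}. Two minor remarks. First, your intermediate observation that multiplication by $\pi$ embeds each layer $\Omega_{[P,n]}(A)/\Omega_{[P,n-1]}(A)$ into the previous one is correct but becomes superfluous once the injective-hull step is in place, and you rightly note that bounded layers alone do not give the descending chain condition. Second, beware a notational clash: the paper writes $D_P$ for the residue field $D/P$, not for the localization; your use is standard but should be flagged. It is also worth making explicit why $A$ is a module over the localization: for $s\in D\setminus P$ the image of $s$ in $D/P^n$ is a unit, so $s$ acts bijectively on each $\Omega_{[P,n]}(A)$ and hence on $A$; the same remark shows that $D$-submodules and $D_P$-submodules of $A$ coincide, so Artinian over the localization is the same as Artinian over $D$.
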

\begin{proof} Put $D_P=D/P$. The  $D$-submodule  $\Omega_{[P, 1]} (A)$  is a  $DG$-submodule of  $A$ and  $\mathrm{Ann}_D(a) = P$ for   each $a \in  \Omega_{[P, 1]} (A)$. Obviously,    $\Omega_{[P, 1]} (A)$  is a  $D_P[G]$-module and    $dim_{ D_P}(\Omega_{[P, 1]} (A))$ is finite by Lemma \ref{L:1} and Theorem \ref{T:1}. Thus   $A$ is Artinian as a  $D$-module   (see \cite[Corollary 7.12]{KSS2008}). \end{proof}

Let $A$ be a simple  $R$-module  over a Dedekind domain $R$ (in particular, principal ideal domain). Clearly,   $A \cong  R/P$  for some maximal ideal   $P$. The factors  $R/P^k$  and  $P/P^{k + 1}$  are isomorphic as  $R$-modules for each   $k\in\mathbb{N}$ (see  \cite[Corollary 1.28]{KSS2008}). In particular, the  $R$-module  $R/P^k$  is embedded in the $R$-module  $R/P^{k+1}$ for each   $k \in  \mathbb{N}$ and $\{ R/P^k \mid  k \in  \mathbb{N} \}$ is an injective family of  $R$-modules. Consider the $R$-module $C_{P^\infty} =  \lim_{inj}  \{ R/P^k \mid   k \in  \mathbb{N}\}$ which is called the {\it Pr\"ufer  $P$-module}. It is easy to see that  $C_{P^\infty}$   is a  $P$-module, $\Omega_{[P, k]}(C_{P^\infty}) \cong_RR/P^k$ and
\[
\Omega_{[P, k + 1]} (C_{P^\infty})/\Omega_{[P, k]}(C_{P^\infty}) \cong_R  (R/P^{k + 1})/(P/P^{k + 1}) \cong_R  R/P.
\]
Hence, if  $C$  is a proper  $R$-submodule of  $C_{P^\infty}$ (i.e. $C \not= C_{P^\infty}$),   then  $C =\Omega_{[ P, k]} (C_{P^\infty})$ for some $k\in \mathbb{N}$. Similarly, if  $b \not\in \Omega_{[P, k-1]}  (C_{P^\infty})$,  then $C = Rb$.
Observe also that a  Pr\"ufer  $P$-module is monolithic and its monolith coincides with  $\Omega_{[P, 1]} (C_{P^\infty})$. Indeed, the intersection $C\cap \Omega_{[P,1]}(C_{P^\infty})$ is non-zero for each $R$-submodule $C$ of a Pr\"ufer $P$-module. Since $\Omega_{[P,1]}(C_{P^\infty})$ is a simple $R$-module ($\Omega_{[P,1]}(C_{P^\infty})$ is isomorphic to $R/P$),
\[
C\cap \Omega_{[P,1]}(C_{P^\infty})=\Omega_{[P,1]}(C_{P^\infty}).
\]
It follows that the $R$-monolith of a Pr\"ufer $P$-module includes $\Omega_{[P,1]}(C_{P^\infty})$ and, hence coincides with it, because $\Omega_{[P,1]}(C_{P^\infty})$ is a simple $R$-submodule.

Indeed, let $C$ be a finitely generated $R$-submodule of a Pr\"ufer $P$-module. Then C is a proper $R$-submodule. As we have seen above,  $C$ coincides with $\Omega_{[P,n]}(C_{P^\infty})$ for some $n\in \mathbb{N}$. The last $R$-submodule is cyclic, it is isomorphic  to $R/P^n$. Hence each f.g. $R$-submodule of a Pr\"ufer $P$-module is cyclic. It follows that a Pr\"ufer $P$-module has $R$-rank 1.

\begin{corollary}\label{C:4}
Let $DG$ be the group ring    of a finite group $G$  over  a Dedekind domain $D$. Let $A$  be a  $DG$-module of $\mathrm{sr}_{DG}(A)=r\in \mathbb{N}$ such that it is also   a  $P$-module for some maximal ideal  $P$  of  $D$. If $char(D/P) = 0$, then $A = K \oplus  B$ in which   $K, B$  are  $DG$-submodules. Moreover,  $K$  is a f.g.   $D$-submodule and  $B$  is a direct sum of finitely many Pr\"ufer  $P$-submodules.
\end{corollary}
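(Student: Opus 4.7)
The plan is to reduce the problem to Lemma \ref{L:5}. First I would produce the desired decomposition at the level of $D$-modules, then identify the Prüfer summand as being $G$-invariant, and finally lift the $D$-splitting to a $DG$-splitting.

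\textbf{Step 1 (Artinian structure).} By Lemma \ref{L:6}, the hypothesis $\mathrm{sr}_{DG}(A)=r\in\mathbb{N}$ together with $\mathrm{char}(D/P)=0$ forces $A$ to be an Artinian $D$-module. Since $A$ is $P$-primary, the standard structure theorem for Artinian modules over the Dedekind domain $D$ (applied to the $P$-component) produces a $D$-module decomposition
\[
A \;=\; B_0 \oplus K_0,
\]
where $B_0$ is the maximal divisible $D$-submodule of $A$ and is a direct sum of finitely many copies of the Prüfer $P$-module $C_{P^\infty}$, while $K_0$ is a finitely generated $D$-module, namely a finite direct sum of cyclic $P$-primary pieces $D/P^{n_i}$.

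\textbf{Step 2 ($B_0$ is $DG$-invariant).} Each $g\in G$ acts on $A$ by a $D$-module automorphism, and any $D$-automorphism carries divisible $D$-submodules to divisible $D$-submodules. Since $B_0$ is the maximal divisible $D$-submodule, $B_0 g=B_0$ for every $g\in G$, so $B_0$ is a $DG$-submodule of $A$.

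\textbf{Step 3 (Lifting the splitting via Lemma \ref{L:5}).} We now apply Lemma \ref{L:5} with the $DG$-submodule $B_0$ and the $D$-submodule $K_0$: it provides a $DG$-submodule $K$ of $A$ such that $A=B_0\oplus K$ as $DG$-modules. As $DG$-modules we have $K\cong A/B_0\cong K_0$ (the last isomorphism being only of $D$-modules, but already sufficient), so $K$ is finitely generated as a $D$-module. Setting $B:=B_0$ and keeping this $K$ yields the required decomposition $A=K\oplus B$.

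The only substantive step is Step 1, where one has to know the structure theorem for Artinian $P$-primary modules over a Dedekind domain; Step 2 is essentially automatic, and Step 3 is a direct invocation of Lemma \ref{L:5}.
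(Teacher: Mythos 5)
Your proposal is correct and follows essentially the same route as the paper: Lemma \ref{L:6} gives the Artinian $D$-module structure, the structure theorem for Artinian modules over a Dedekind domain gives the $D$-decomposition into a divisible part (finitely many Pr\"ufer $P$-modules) and a finitely generated part, and Lemma \ref{L:5} lifts the splitting to $DG$-modules. Your Step 2, justifying that the Pr\"ufer summand is $G$-invariant as the maximal divisible $D$-submodule, is exactly the point the paper dismisses with ``clearly,'' so your write-up is if anything slightly more complete.
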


\begin{proof} The module $A$ is Artinian as a $D$-module by Lemma \ref{L:6}, so $A = B \oplus  C$, where $B$  is a direct sum of finitely many Pr\"ufer  $P$-submodules and  $C$  is a f.g.  $D$-submodule. Clearly  $B$  is a  $DG$-submodule, so  there exists a  $DG$-submodule  $K$  with  $A = B \oplus  K$ by Lemma \ref{L:5}.

Since  $K \cong_D C$, the module  $K$  is finitely generated as a  $D$-submodule. \end{proof}

\begin{lemma}\label{L:7}
Let $DG$ be the group ring    of a finite group $G$  over  a Dedekind domain $D$. Let  $B$  be a  $DG$-submodule of a  $DG$-module  $A$  which satisfies   the following conditions:
\begin{itemize}
\item[(i)]  $B = \oplus_{ \lambda \in  \Lambda} B_\lambda$  where  $B_\lambda$  is a simple  $DG$-submodule for all  $\lambda \in  \Lambda$;
\item[(ii)]  $A/B$  is a simple  $DG$-module and  $A/B\not\cong_{DG} B_\lambda$ for all   $\lambda\in \Lambda$;
\item[(iii)]  $\mathrm{Ann}_D(B) = \mathrm{Ann}_D(A/B) = P$  is a maximal ideal of  $D$;
\item[(iv)]  $char(D/P) = 0$.
\end{itemize}

Then there exists a  $DG$-submodule  $K$  such that  $A = B \oplus  K$.
\end{lemma}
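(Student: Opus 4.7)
The plan is to reduce to the case where $P$ annihilates all of $A$, and then apply Maschke's theorem to produce the required complement.

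First, I observe that $\mathrm{Ann}_D(B) = P$ gives $PB = \gp{0}$, while $\mathrm{Ann}_D(A/B) = P$ gives $PA \subseteq B$; in particular $P^2A = \gp{0}$. Set $A_1 = \Omega_{[P,1]}(A)$, which is a $DG$-submodule of $A$ containing $B$. Since $A/B$ is simple, the submodule $A_1/B$ of $A/B$ is either $\gp{0}$ or the whole of $A/B$, so either $A_1 = B$ or $A_1 = A$.

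The heart of the argument is to rule out $A_1 = B$. For each $p \in P$, the map $\mu_p \colon A \to A$, $a \mapsto ap$, is a $DG$-endomorphism (because $D$ lies in the centre of $DG$). Since $PA \subseteq B$ and $PB = \gp{0}$, the map $\mu_p$ factors through a $DG$-homomorphism $\overline{\mu}_p \colon A/B \to B$. As $A/B$ is simple, $\overline{\mu}_p$ is either zero or injective; if injective, its image is a simple $DG$-submodule of $B = \bigoplus_{\lambda \in \Lambda} B_\lambda$, and by composing with the projections onto the summands and applying Schur's lemma this image is isomorphic to some $B_\lambda$. Hypothesis (ii) forbids this, so $\overline{\mu}_p = 0$ for every $p \in P$. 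Hence $AP = \gp{0}$, so $A \subseteq A_1 = B$, contradicting the fact that $A/B$ is simple and therefore nonzero.

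Thus $A_1 = A$, i.e.\ $PA = \gp{0}$, and $A$ is a module over $(D/P)G$. By hypothesis (iv), $\mathrm{char}(D/P) = 0$, so $|G|$ is invertible in $D/P$, and Maschke's theorem makes $(D/P)G$ a semisimple algebra. Consequently $A$ is a semisimple $(D/P)G$-module, and a fortiori a semisimple $DG$-module; the $DG$-submodule $B$ therefore admits a $DG$-complement $K$ with $A = B \oplus K$. The only delicate point in the argument is the vanishing of $\overline{\mu}_p$, which rests squarely on hypothesis (ii) together with Schur's lemma; the rest is a routine application of Maschke.
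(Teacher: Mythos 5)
Your proof is correct and follows essentially the same route as the paper: the key step in both is that multiplication by an element of $P$ gives a $DG$-endomorphism of $A$ that kills $B$ and hence factors through the simple module $A/B$, so a nonzero image would be a simple $DG$-submodule of $B$ isomorphic to $A/B$, contradicting (ii), whence $PA=\gp{0}$. The only differences are minor: the paper works with a single $y\in P\setminus P^2$ (implicitly using $P=P^{2}+yD$) and finishes by invoking Lemma~\ref{L:5}, whereas you run the argument for every $p\in P$ and conclude directly from the semisimplicity of $(D/P)G$ via Maschke's theorem; both finishes are valid.
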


\begin{proof}
Let us fix  $y \in  P \setminus P^2$. Consider  a map  $f: A\to A$ such that  $f(a) = ya$ for  $a \in  A$. The map  $f$  is a   $DG$-endomorphism and $yA = Im(f) \leq B$. Indeed,  $Ann_D(A/B) = P$ by (iii),  $y\in P \setminus P^2\subset P$ and  $y\in Ann_D(A/B)$, so $yA\leq B$.

If   $yA\not= \gp{0}$,  then  $Ker(f) = B$ by condition (ii) and $yA = Im(f) \cong_{ DG} A/Ker(f) = A/B$,  so that  $yA$  is a simple  $DG$-submodule of  $B$. Thus  $yA \cong_{DG} B_\lambda$  for some  $\lambda \in  \Lambda$ (see \cite[Corollary 4.4]{KOS2007}), which contradicts condition (ii).
Consequently,   $yA = \gp{0}$ and   $PA = \gp{0}$. It follows that  $A$ is  a vector space over  $D/P$ and   $A=B\oplus  C$  for a   $D$-submodule $C$ by Lemma \ref{L:5}.
\end{proof}

\begin{corollary}\label{C:5}
Let  $DG$ be a group ring of a finite group $G$ over  a Dedekind domain $D$. Let  $A$  be a  $DG$-module such that  the following conditions hold:
\begin{itemize}
\item[(i)]  $\mathrm{sr}_{DG}(A)=r\in \mathbb{N}$;\
\item[(ii)] $A$  is    a  $P$-module for some maximal ideal  $P$  of  $D$;
\item[(iii)] $A$ is a f.d.  $D$-module.
\end{itemize}
If  $char(D/P) = 0$, then  $A = \oplus_{1 \leq j \leq n} L_j$ in which    each $L_j$  is a  $(G, P)$-homogeneous  $DG$-submodule of  $A$.
\end{corollary}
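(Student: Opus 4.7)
The plan is to reduce the statement to a block decomposition over the Artinian group algebra $(D/P^m)G$ by lifting primitive central idempotents from the semisimple quotient $(D/P)G$.

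First, since $A$ is a finitely generated (``f.d.'') $D$-module and is Artinian over $D$ by Lemma \ref{L:6}, $A$ has finite length as a $D$-module. Hence there exists $m\in\mathbb{N}$ with $P^m A=\gp{0}$, equivalently $A=\Omega_{[P,m]}(A)$. Consequently $A$ becomes a module over the finite-length ring $R=(D/P^m)G$. The ideal $J=(P/P^m)G$ is nilpotent in $R$, and the quotient $R/J\cong (D/P)G$ is semisimple by Maschke's theorem, since $\mathrm{char}(D/P)=0$ and $G$ is finite.

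Next, I would use the primitive central idempotent decomposition of the semisimple algebra $(D/P)G$. Let $\bar e_1,\ldots,\bar e_n\in (D/P)G$ be these primitive central idempotents and let $S_1,\ldots,S_n$ be representatives of the corresponding isomorphism classes of simple $(D/P)G$-modules. I would then lift them to orthogonal central idempotents $e_1,\ldots,e_n\in R$ satisfying $e_1+\cdots+e_n=1$. This rests on two observations: the center $Z(R)$ is spanned over $D/P^m$ by the conjugacy class sums of $G$ and therefore surjects onto $Z(R/J)$ with nilpotent kernel $Z(R)\cap J$; and idempotents lift modulo a nilpotent ideal. Setting $L_i=e_iA$ yields a direct sum decomposition $A=\oplus_{1\leq i\leq n} L_i$ of $DG$-submodules.

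Finally, I would verify that each nonzero $L_i$ is $(G,P)$-homogeneous. Any simple $DG$-subquotient $W$ of $L_i$ is a simple $P$-module, hence annihilated by $P$, so $W$ is a $(D/P)G$-module. On $W$ the idempotent $e_i$ acts as $1$ and $e_j$ acts as $0$ for $j\neq i$; reducing modulo $P$ forces $\bar e_i$ to act as $1$ and $\bar e_j$ to act as $0$ on $W$, so $W$ lies in the block $\bar e_i(D/P)G$ and therefore $W\cong_{DG} S_i$. Hence every composition factor of $L_i$ is isomorphic to $S_i$, and a composition series of $L_i$ (finite, since $L_i$ has finite length as a $D$-module) is an ascending series of $DG$-submodules with pairwise isomorphic simple factors, certifying that $L_i$ is $(G,P)$-homogeneous. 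Discarding the zero summands and re-indexing yields the stated decomposition.

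The main obstacle, although technically routine, is the lifting of the \emph{central} idempotents from $(D/P)G$ to $(D/P^m)G$; once that step is justified via the class-sum description of the centers together with the nilpotency of $J$, the rest of the argument is a standard block-decomposition verification.
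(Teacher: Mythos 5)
Your argument is correct, but it takes a genuinely different route from the paper's. The paper works with the socle: it applies Theorem \ref{T:1} to the $(D/P)G$-module $\Omega_{[P,1]}(A)$ to split it into its homogeneous components $H_j$, and then asserts that this decomposition lifts to a decomposition $A=\oplus_{1\leq j\leq n}L_j$ with $\Omega_{[P,1]}(L_j)=H_j$; the lifting step is stated rather than detailed, and implicitly rests on the splitting lemmas (Lemmas \ref{L:5} and \ref{L:7}). You instead truncate: since $A$ is a finitely generated $P$-torsion $D$-module, some $P^m$ kills it, so $A$ lives over the Artinian ring $(D/P^m)G$, and you lift the primitive central idempotents of the semisimple quotient $(D/P)G$ through the nilpotent ideal $(P/P^m)G$, the class-sum description of both centers making the lift land in the center. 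The block decomposition $A=\oplus_i e_iA$ then does all the work, and homogeneity of each $e_iA$ comes out cleanly because every simple $DG$-subquotient is annihilated by $P$ and belongs to the block of $\bar e_i$, hence is isomorphic to $S_i$. Two remarks on the comparison. First, your proof never uses hypothesis (i): finiteness of the special rank plays no role once $A$ has finite length over $D$, so you prove a slightly more general statement. Second, what the paper's socle-based template buys is reusability in Corollaries \ref{C:6}--\ref{C:9}, where $A$ need not be finitely generated over $D$ and no single power $P^m$ annihilates it, so your truncation is unavailable and one must work up the chain $\Omega_{[P,1]}(A)\leq\Omega_{[P,2]}(A)\leq\cdots$ anyway. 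The only step you should make fully explicit is the lifting of a complete orthogonal family of idempotents through the nil ideal $Z((D/P^m)G)\cap (P/P^m)G$ of the commutative ring $Z((D/P^m)G)$, together with the observation that the lift of $1$ is again $1$ (an idempotent congruent to $1$ modulo a nil ideal equals $1$); this is standard, and with it your proof is complete.
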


\begin{proof}  As before, the  $(D/P)G$-module $\Omega_{[P, 1]} (A)$    is a semisimple  $DG$-submodule by  Lemma \ref{L:1} and Theorem \ref{T:1}. Consequently,   $\Omega_{[P, 1]} (A)  =  \oplus_{ 1 \leq j \leq n} H_j$ in which each  $H_j$  is a homogeneous component of  $\Omega_{[P, 1]} (A)$ and  $A =\oplus_{  1 \leq j \leq n} L_j$ in which  each $L_j$  is a  $(G, P)$-homogeneous  $DG$-submodule of  $A$ such that  $\Omega_{[P, 1]}(L_j) = H_j$.
\end{proof}

\begin{corollary}\label{C:6}
Let  $DG$ be the  group ring of a finite group $G$ over  a Dedekind domain $D$.
Let  $A$  be a  $DG$-module such that  the following conditions hold:
\begin{itemize}
\item[(i)]  $\mathrm{sr}_{DG}(A)=r\in \mathbb{N}$;\
\item[(ii)] $A$  is    a  $P$-module for some maximal ideal  $P$  of  $D$;
\end{itemize}
If  $char(D/P) = 0$, then  $A = \oplus_{ 1 \leq j \leq n} L_j$  in which  each  $L_j$  is a  $(G, P)$-homogeneous  $DG$-submodule of  $A$.
\end{corollary}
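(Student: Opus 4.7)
The plan is to reduce Corollary \ref{C:6} to Corollary \ref{C:5} by establishing that the finite-dimensionality hypothesis (iii) of Corollary \ref{C:5} can be bypassed here. First, I would invoke Lemma \ref{L:6}: since $A$ is a $P$-module with $\mathrm{sr}_{DG}(A)=r\in\mathbb{N}$ and $\mathrm{char}(D/P)=0$, Lemma \ref{L:6} gives that $A$ is Artinian as a $D$-module. Consequently $\Omega_{[P,1]}(A)$ is Artinian and semisimple over $D/P$, hence a finite-dimensional $(D/P)G$-module. Applying Theorem \ref{T:1} over the field $D/P$ (of characteristic $0$) yields a decomposition
\[
\Omega_{[P,1]}(A)=\oplus_{1\leq j\leq n}H_j
\]
of the socle into its $G$-homogeneous components, with $n\leq \mathrm{nns}_{D/P}(G)$.

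Next I would lift this decomposition to one of $A$. Using Corollary \ref{C:4}, write $A=K\oplus M$ as $DG$-modules, where $K$ is finitely generated as a $D$-module (being $P$-torsion over a Dedekind domain, it has finite length) and $M$ is a direct sum of finitely many Pr\"ufer $P$-modules as a $D$-module. The summand $K$ satisfies hypothesis (iii) of Corollary \ref{C:5}, so Corollary \ref{C:5} supplies a decomposition $K=\oplus_{i} K_i$ into $(G,P)$-homogeneous $DG$-submodules.

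For $M$ I would proceed inductively along the ascending filtration $\{\Omega_{[P,k]}(M)\}_{k\geq 1}$. At the base step, each $H_j\cap M$ is already a $G$-homogeneous $DG$-submodule of $\Omega_{[P,1]}(M)$. At step $k\!+\!1$, the factor $\Omega_{[P,k+1]}(M)/\Omega_{[P,k]}(M)$ is annihilated by $P$, and Lemma \ref{L:5} allows one to convert $D$-splittings into $DG$-splittings, so the decomposition of $\Omega_{[P,k]}(M)$ into $(G,P)$-homogeneous blocks matched to the $H_j$ extends to a decomposition of $\Omega_{[P,k+1]}(M)$. Setting $L_j=\bigcup_{k}L_{j,k}$ and taking the union $M=\oplus_j L_j$ in the direct limit produces the desired $(G,P)$-homogeneous decomposition of $M$, and combined with the $K_i$'s it yields $A=\oplus L_j$.

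The main obstacle is the lifting step: while the socle decomposition of $\Omega_{[P,1]}(A)$ is obtained cleanly from Theorem \ref{T:1}, transferring it to the whole module in the absence of a finite-length hypothesis requires a careful inductive use of Lemma \ref{L:5} along the Loewy filtration of $M$ and a direct-limit argument to patch the finite-level decompositions into a global one. The reduction to the f.g. part via Corollary \ref{C:4} is what makes this induction manageable, because $M$ is both Artinian and built from only finitely many Pr\"ufer blocks.
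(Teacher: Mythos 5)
Your proposal is correct and follows essentially the same route as the paper, which simply writes $A=\bigcup_{j}\Omega_{[P,j]}(A)$ and applies Corollary \ref{C:5} inductively along this filtration. Your extra detour through Corollary \ref{C:4} (splitting off the Pr\"ufer part before doing the induction) is harmless but not needed, since Lemma \ref{L:6} already makes each $\Omega_{[P,j]}(A)$ finitely generated over $D$, so Corollary \ref{C:5} applies level by level exactly as you do for $M$.
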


\begin{proof} Since  $A$ is a  $P$-module, $A  = \cup_{ j \in \mathbb{N}}  \Omega_{[P, j]} (A)$. Using induction and  Corollary \ref{C:5} we get the result.
\end{proof}

Let     $A$ be a  $D$-module over  a Dedekind domain $D$. The intersection $\Phi(A)$  of all maximal  $D$ - submodules of  $A$ is called the {\it Frattini submodule}  of  $A$. Of course, if  $A$  does not include proper maximal submodules, then $\Phi(A) = A$.

If  $A$ is a f.g. periodic module  and  $\{ a_j + \Phi(A)\mid 1 \leq j \leq n \}$  is a generating set for  $A/\Phi(A)$, then $\{ a_j\mid 1 \leq j \leq n \}$  is a generating set for  $A$.  Moreover, if  $\{ a_j + \Phi(A) \mid 1 \leq j \leq n \}$  is a minimal generating set for  $A/\Phi(A)$, then $\{ a_j\mid 1 \leq j \leq n \}$  is a minimal generating set for  $A$.

\begin{lemma}\label{L:8}
Let  $DG$ be the group ring of a finite group $G$ over  a Dedekind domain $D$. Let  $A$  be a  $DG$-module  such that  the following conditions hold:
\begin{itemize}
\item[(i)]   $A$ is  a  $P$-module for some maximal ideal  $P$  of  $D$;
\item[(ii)]  $A$ is finitely generated as  a $D$-module;
\item[(iii)] $A$ is  $(G, P)$-homogeneous;
\item[(iv)]   char(D/P) = 0.
\end{itemize}

The number  $\mathrm{sr}_{DG}(A)$ is equal to $r\in \mathbb{N}$   if and only if  $\Omega_{[P, 1]}(A)$  is a direct sum of  $r$ copies of   simple  $DG$-submodules.
\end{lemma}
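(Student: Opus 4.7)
The plan is to reduce the computation of $\mathrm{sr}_{DG}(A)$ to counting the simple $DG$-summands of $\Omega_{[P,1]}(A)$, via a Frattini-type argument together with a local comparison of $D/P$-dimensions.

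First I would establish that the intersection $\Phi_{DG}(A)$ of all maximal $DG$-submodules of $A$ coincides with $PA$. Any simple $DG$-quotient $A/M$ is $P$-torsion and finitely generated over $D$, so its $D$-annihilator must be the maximal ideal $P$; hence $PA\subseteq M$, giving $PA\subseteq\Phi_{DG}(A)$. Conversely, $\mathrm{char}(D/P)=0$ together with the finiteness of $G$ makes $(D/P)G$ semisimple by Maschke, so $A/PA$ is a semisimple $DG$-module and its $DG$-Frattini submodule is zero. Therefore $\Phi_{DG}(A)=PA$.

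Next, because $A$ is $(G,P)$-homogeneous, every simple $DG$-subquotient of $A$ is isomorphic to a single simple $DG$-module $S$. Both $A/PA$ and $\Omega_{[P,1]}(A)$, being semisimple over $(D/P)G$ with composition factors among those of $A$, are therefore direct sums of copies of $S$; write $A/PA\cong S^{m}$ and $\Omega_{[P,1]}(A)\cong S^{k}$. To prove $m=k$ I would localize at $P$: the ring $D_{P}$ is a discrete valuation ring, $A=A_{P}$ is a finitely generated torsion $D_{P}$-module, and the structure theorem gives $A\cong\bigoplus_{i} D_{P}/P^{e_{i}}$. A direct count then yields $\dim_{D/P}(A/PA)=\dim_{D/P}\Omega_{[P,1]}(A)$, and equating the two sides with $m\cdot\dim_{D/P} S$ and $k\cdot\dim_{D/P} S$ respectively forces $m=k$.

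With these ingredients in place, the Frattini-submodule paragraph stated immediately before Lemma \ref{L:8}, applied to the $DG$-module $A$, implies that the minimal number of $DG$-generators of $A$ equals the minimal number of generators of $A/PA\cong S^{m}$, which by Corollary \ref{C:2} is $m=k$. For an arbitrary finitely generated $DG$-submodule $B\subseteq A$ the same reasoning applies, since $B$ is again a $(G,P)$-homogeneous $P$-module that is finitely generated over $D$ (because $A$ is $D$-Noetherian); this yields a minimal number of $DG$-generators equal to the number of copies of $S$ in $\Omega_{[P,1]}(B)\subseteq\Omega_{[P,1]}(A)\cong S^{k}$, which is at most $k$. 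Taking $B=A$ attains the bound, so $\mathrm{sr}_{DG}(A)=k$, proving both directions of the equivalence.

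The main technical obstacle I anticipate is the dimensional equality $m=k$: it does not follow from semisimplicity considerations alone and seems to require either the explicit DVR-decomposition of $A_{P}$ described above or an induction on the least $n$ with $P^{n}A=0$, exploiting that multiplication by a uniformizer induces the relevant isomorphisms between the layers $P^{i}A/P^{i+1}A$ and $\Omega_{[P,i+1]}(A)/\Omega_{[P,i]}(A)$.
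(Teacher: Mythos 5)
Your proof is correct and follows essentially the same route as the paper: both reduce $\mathrm{sr}_{DG}(A)$ to the number of simple summands of $\Omega_{[P,1]}(A)$ by comparing it with the Frattini quotient of $A$ and invoking Corollary~\ref{C:2}, together with Lemma~\ref{L:1} and Theorem~\ref{T:1} for the bound on finitely generated submodules. The only difference is one of detail: the paper merely asserts the isomorphism $\Omega_{[P,1]}(A)\cong_{DG}A/\Phi(A)$, whereas you justify the underlying multiplicity equality $\dim_{D/P}(A/PA)=\dim_{D/P}\Omega_{[P,1]}(A)$ via the structure theorem over the discrete valuation ring $D_{P}$, which is a genuine (and welcome) filling of the gap rather than a different approach.
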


\begin{proof} If  $\mathrm{sr}_{DG}(A)=r\in \mathbb{N}$, then   $\Omega_{[P, 1]} (A) = M_1 \oplus  \cdots\oplus  M_k$,  in which  each  $M_j$ is a  simple  $DG$-submodule and  $M_j \cong_{DG} M_m$  for all  $j, m$, and $ k \leq r$ by  Lemma \ref{L:1} and Theorem \ref{T:1}. Note that  $\Omega_{[P, 1]} (A) \cong_{DG} A/\Phi(A)$, so  $k = r$.  Conversely,  if  $k = r$,   then  $\mathrm{sr}_{DG}(A)=r\in \mathbb{N}$.\end{proof}

\begin{corollary}\label{C:7}   Let  $DG$ be the group ring of a finite group $G$ over  a Dedekind domain $D$. Let  $A$  be a  $DG$-module such that the following conditions hold:
\begin{itemize}
\item[(i)]   $A$ is  a  $P$-module for some maximal ideal  $P$  of  $D$;
\item[(ii)]  $A$ is finitely generated as a $D$-module;
\item[(iii)] $char(D/P) = 0$.
\end{itemize}
Then $\mathrm{sr}_{DG}(A)=r\in \mathbb{N}$  if and only if  the following conditions hold:
\begin{itemize}
\item[(a)]  $A = \oplus_{1 \leq j \leq n} H_j$ in which each   $H_j$  is a  $(G, P)$-homogeneous  $DG$-submodule of  $A$;
\item[(b)]   $\Omega_{[P, 1]}(H_j)$  is a direct sum of at most  $r$  simple  $DG$-submodules of  $A$;
\item[(c)]   there exists $t\in \mathbb{N}$  such that  $\Omega_{[P, 1]}(H_t)$  is a direct sum of $r$  simple  $DG$-submodules of  $A$.
\end{itemize}
\end{corollary}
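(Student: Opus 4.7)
The plan is to reduce Corollary \ref{C:7} to the combination of Corollary \ref{C:6} (which provides the decomposition into $(G,P)$-homogeneous summands) and Lemma \ref{L:8} (which computes the special rank of each homogeneous piece), then to combine these pieces via an argument in the spirit of Corollary \ref{C:3}.

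For necessity, assume $\mathrm{sr}_{DG}(A)=r$. I would first apply Corollary \ref{C:6} to obtain a decomposition $A=\oplus_{1\leq j\leq n}H_j$ with each $H_j$ a $(G,P)$-homogeneous $DG$-submodule; this is (a). Since $A$ is finitely generated as a $D$-module and each $H_j$ is a direct summand, every $H_j$ is also f.g.\ as a $D$-module, so Lemma \ref{L:8} applies to each of them, giving $\Omega_{[P,1]}(H_j)$ as a direct sum of $\mathrm{sr}_{DG}(H_j)$ pairwise isomorphic simple $DG$-submodules. Combined with the inequality $\mathrm{sr}_{DG}(H_j)\leq\mathrm{sr}_{DG}(A)=r$ from Lemma \ref{L:1}, this yields (b).

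To establish (c), the key point is to show $\mathrm{sr}_{DG}(A)=\max_j\mathrm{sr}_{DG}(H_j)$. I would argue this by passing to the socle $\Omega_{[P,1]}(A)=\oplus_{j=1}^n\Omega_{[P,1]}(H_j)$: since $\mathrm{char}(D/P)=0$ the algebra $(D/P)G$ is semisimple, and the construction in Corollary \ref{C:5} shows that the summands $\Omega_{[P,1]}(H_j)$ are precisely the distinct homogeneous components of the semisimple $DG$-module $\Omega_{[P,1]}(A)$. Applying Corollary \ref{C:3} to $\Omega_{[P,1]}(A)$ gives $\mathrm{sr}_{DG}(\Omega_{[P,1]}(A))=\max_j\mathrm{sr}_{DG}(\Omega_{[P,1]}(H_j))=\max_j\mathrm{sr}_{DG}(H_j)$. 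To compare with $\mathrm{sr}_{DG}(A)$, I would invoke the Frattini-type observation made just before Lemma \ref{L:8}: for any f.g.\ $DG$-submodule $N\leq A$ the minimal cardinality of a $DG$-generating set of $N$ equals that of $N/\Phi(N)=N/PN$, and the argument used inside the proof of Lemma \ref{L:8} (namely $\Omega_{[P,1]}\cong_{DG} A/\Phi(A)$ in the homogeneous case) propagates additively to the direct sum $N=\oplus(N\cap H_j)$, so that $\mathrm{sr}_{DG}(A)=\mathrm{sr}_{DG}(\Omega_{[P,1]}(A))$. This equality forces $\max_j\mathrm{sr}_{DG}(H_j)=r$, hence (c).

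For sufficiency, I reverse the chain. Conditions (a)–(c) together with Lemma \ref{L:8} imply that each $H_j$ has $\mathrm{sr}_{DG}(H_j)\leq r$, with equality for at least one index $t$; Lemma \ref{L:1} then gives $\mathrm{sr}_{DG}(A)\geq\mathrm{sr}_{DG}(H_t)=r$, while the reverse inequality $\mathrm{sr}_{DG}(A)\leq r$ follows from the same socle-plus-Frattini computation as above applied to an arbitrary f.g.\ $DG$-submodule of $A$.

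The hard step I anticipate is the reduction $\mathrm{sr}_{DG}(A)=\mathrm{sr}_{DG}(\Omega_{[P,1]}(A))$ in the non-homogeneous case: one must verify that for a finitely generated $DG$-submodule $N$ of $A=\oplus H_j$ the intersections $N\cap H_j$ are well-behaved enough that the isomorphism $N/PN\cong_{DG}\Omega_{[P,1]}(N)$ still holds, equivalently that the annihilators of the distinct $H_j$ are pairwise comaximal modulo $P$. Once this splitting-and-Frattini reduction is in place, everything collapses onto Corollary \ref{C:3} applied to a semisimple module over the semisimple algebra $(D/P)G$, and Lemma \ref{L:8} supplies the translation back to the $(G,P)$-homogeneous summands $H_j$.
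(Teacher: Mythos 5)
Your proposal follows essentially the same route as the paper: the paper's own proof is a one-line reduction to ``the arguments from the proof of Theorem \ref{T:1}'' together with the isomorphism $\Omega_{[P,1]}(A)\cong_{DG}A/\Phi(A)$, which is precisely the socle-plus-Frattini reduction to Corollary \ref{C:3} (via Corollary \ref{C:5} and Lemma \ref{L:8}) that you carry out. You in fact supply more detail than the paper, including explicitly flagging the step --- passing the Frattini/socle identification to arbitrary finitely generated submodules of the non-homogeneous direct sum --- that the paper leaves entirely implicit.
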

\begin{proof}
For the  proof it is sufficient to use the arguments from the proof of  Theorem \ref{T:1} and the isomorphism  $\Omega_{[P, 1]}(A) \cong_{DG} A/\Phi(A)$.
\end{proof}

\begin{corollary}\label{C:8}
Let  $DG$ be the group ring of a finite group $G$ over  a Dedekind domain $D$. Let  $A$  be a  $DG$-module such that the following conditions hold:
\begin{itemize}
\item[(i)] $A$ is  a  $P$-module for some maximal ideal  $P$  of  $D$;
\item[(ii)]   $char(D/P) = 0$.
\end{itemize}
The number $\mathrm{sr}_{DG}(A)$ is finite and equals to $r\in \mathbb{N}$  if and only if  the following conditions hold:
\begin{itemize}
\item[(a)]  $A = \oplus_{1 \leq j \leq n} H_j$ in which  each  $H_j$  is a  $(G, P)$-homogeneous  $DG$-submodule of  $A$;
\item[(b)]   $\Omega_{[P, 1]}(H_j)$  is a direct sum of at most  $r$  simple  $DG$-submodules of  $A$;
\item[(c)]   there exists $t\in \mathbb{N}$   such that  $\Omega_{[P, 1]}(H_t)$  is a direct sum of $r$  simple  $DG$-submodules of  $A$.
\end{itemize}
In particular, if  $A$ has finite special rank, then  $A$ is Artinian as a $D$-module.
\end{corollary}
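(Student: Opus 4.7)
The plan is to reduce Corollary \ref{C:8} to the finitely-generated case Corollary \ref{C:7} by means of two established facts. Lemma \ref{L:6} shows that a finite $\mathrm{sr}_{DG}(A)$ forces $A$ to be Artinian as a $D$-module, which settles the ``in particular'' assertion, while Corollary \ref{C:6} dispenses with the finite-generation hypothesis on the $D$-structure and supplies decomposition (a) directly. The key technical link between the two corollaries is that, because $G$ is finite, every f.g.\ $DG$-submodule of $A$ is automatically f.g.\ as a $D$-module, so Corollary \ref{C:7} is available on each such finite piece of $A$.

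For the necessity direction, Corollary \ref{C:6} yields (a). For (b), I apply Lemma \ref{L:1} to $\Omega_{[P,1]}(H_j)\le A$ and then use Corollary \ref{C:2}: since $H_j$ is $(G,P)$-homogeneous, $\Omega_{[P,1]}(H_j)$ is a direct sum of pairwise isomorphic simple $DG$-submodules, and the number of summands is bounded by $\mathrm{sr}_{DG}(A)=r$. For (c), I choose a f.g.\ $DG$-submodule $M\le A$ requiring exactly $r$ generators; finiteness of $G$ makes $M$ also f.g.\ as a $D$-module, so Corollary \ref{C:7} applied to $M$ identifies $r=\mathrm{sr}_{DG}(M)$ with the largest number of isomorphic simple summands in $\Omega_{[P,1]}(M)$, and this maximum must already be attained inside the ambient $\Omega_{[P,1]}(H_t)$ for some $t$.

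For sufficiency, suppose (a), (b), (c) hold, and let $M\le A$ be an arbitrary f.g.\ $DG$-submodule. Finiteness of $G$ makes $M$ f.g.\ as a $D$-module, so Corollary \ref{C:7} applies to $M$ directly: $\mathrm{sr}_{DG}(M)$ equals the largest number of isomorphic simple $DG$-summands appearing in $\Omega_{[P,1]}(M)$. Since $\Omega_{[P,1]}(M)\le \bigoplus_j \Omega_{[P,1]}(H_j)$ and the $j$-th ambient piece has at most $r$ summands by (b), the bound $\mathrm{sr}_{DG}(M)\le r$ follows. Conversely, the submodule $N=\Omega_{[P,1]}(H_t)$ from (c) is a direct sum of $r$ isomorphic simple $DG$-submodules, hence is f.g.\ as a $D$-module and has $\mathrm{sr}_{DG}(N)=r$ by Corollary \ref{C:2}, exhibiting the required rank.

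The main obstacle is justifying the inheritance step in the sufficiency argument, namely that the homogeneous-component structure of $\Omega_{[P,1]}(M)$ is controlled by the homogeneous-component structure of $\Omega_{[P,1]}(A)$. This reduces to the observation that any $DG$-submodule of the semisimple module $\bigoplus_j \Omega_{[P,1]}(H_j)$ splits compatibly with this decomposition into homogeneous pieces and, within each, consists of a direct sum of at most as many isomorphic simples as the ambient piece. Once this is in place, the Corollary \ref{C:7} bound on $M$ threads through, and the proof closes.
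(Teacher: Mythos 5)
Your proposal is correct and follows essentially the route the paper intends: the paper states Corollary \ref{C:8} without a separate proof precisely because it is obtained, as you do, by combining Lemma \ref{L:6} (finite special rank forces $A$ to be $D$-Artinian), Corollary \ref{C:6} (the decomposition into $(G,P)$-homogeneous summands without a finite-generation hypothesis), and Corollary \ref{C:7} applied to f.g.\ $DG$-submodules, which are f.g.\ over $D$ since $G$ is finite. Your explicit treatment of the inheritance of homogeneous components of $\Omega_{[P,1]}(M)$ inside those of $\Omega_{[P,1]}(A)$ fills in the one step the paper leaves tacit.
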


\begin{corollary}\label{C:9}
 Let  $DG$ be the group ring of a finite group $G$ over  a Dedekind domain $D$. Let  $A$  be a  $DG$-module which  satisfies the following conditions:
\begin{itemize}
\item[(i)]  $A$ is a $D$-periodic module;
\item[(ii)] $A$  is finitely generated as a  $D$-module;
\item[(iii)]  $char(D/P) = 0$  for all maximal ideal  $P\in \mathrm{Ass}_R(A)$.
\end{itemize}
The number  $\mathrm{sr}_{DG}(A)=r\in \mathbb{N}$  if and only if  the following conditions hold:
\begin{itemize}
\item[(a)]  $A = \oplus_{1 \leq j \leq n; P \in \pi}   H_{j, P}$ in which    each $H_{j, P}$  is a  $(G,P)$-homogeneous  $DG$-submodule of  $A$ and  $P\in \pi = \mathrm{Ass}_R(A)$;
\item[(b)]   $\Omega_{[P, 1]}(H_{j, P})$ is a direct sum of at most  $r$  simple  $DG$-submodules of  $A$;
\item[(c)]  there exists $t\in \mathbb{N}$  and a maximal ideal  $P$  such that  $\Omega_{[P, 1]}(H_{t, P})$  is a direct sum of $r$  simple  $DG$-submodules of  $A$.
\end{itemize}
\end{corollary}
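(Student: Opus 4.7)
The plan is to reduce Corollary \ref{C:9} to Corollary \ref{C:8} by splitting $A$ into its $P$-primary components and then gluing the ranks back together using the coprimality of distinct maximal ideals.

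First, since $A$ is $D$-periodic and finitely generated as a $D$-module over the Dedekind domain $D$, the assassinator $\pi=\mathrm{Ass}_D(A)$ is finite, and the classical primary decomposition gives $A=\oplus_{P\in\pi}A_P$ as $D$-modules. Because $G$ acts $D$-linearly, each $A_P=\{a\in A\mid aP^n=\langle0\rangle\text{ for some }n\}$ is $G$-stable, hence a $DG$-submodule. Each $A_P$ remains finitely generated as a $D$-module (it is a $D$-direct summand of $A$), is itself a $P$-module, and by hypothesis $\mathrm{char}(D/P)=0$. Therefore Corollary \ref{C:8} applies to each $A_P$, yielding its decomposition $A_P=\oplus_{j}H_{j,P}$ with $H_{j,P}$ being $(G,P)$-homogeneous, and characterizing $\mathrm{sr}_{DG}(A_P)$ as the maximum number of simple summands appearing in $\Omega_{[P,1]}(H_{j,P})$ as $j$ varies.

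The heart of the proof is the equality
\[
\mathrm{sr}_{DG}(A)=\max_{P\in\pi}\mathrm{sr}_{DG}(A_P).
\]
The inequality $\geq$ is immediate from Lemma \ref{L:1}(i) applied to each summand $A_P\subseteq A$. For the inequality $\leq$, let $E$ be an arbitrary finitely generated $DG$-submodule of $A$; since $E$ is $D$-periodic and f.g., its support lies in finitely many $P\in\pi$, and the primary decomposition is intrinsic, so $E=\oplus_{P\in\pi}(E\cap A_P)$ with each $E\cap A_P$ finitely generated as a $DG$-module. Writing $r_P=\mathrm{sr}_{DG}(A_P)$ and $r=\max_P r_P$, choose for each $P$ a generating set $\{e_{P,1},\ldots,e_{P,r}\}$ of $E\cap A_P$ (padding with zeros if fewer than $r$ generators suffice). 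Because each $E\cap A_P$ is annihilated by some power $P^{n(P)}$ and the ideals $P^{n(P)}$ are pairwise coprime, by the Chinese Remainder Theorem there exist $d_P\in D$ with $d_P\equiv 1\pmod{P^{n(P)}}$ and $d_P\equiv 0\pmod{Q^{n(Q)}}$ for each $Q\in\pi$ distinct from $P$. Setting $f_j=\sum_{P\in\pi}e_{P,j}$ for $1\leq j\leq r$, one computes $d_P\cdot f_j=e_{P,j}$, so every $e_{P,j}$ lies in $\langle f_1,\ldots,f_r\rangle_{DG}$, and hence $E=\langle f_1,\ldots,f_r\rangle_{DG}$. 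This shows $\mathrm{sr}_{DG}(A)\leq r$.

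Combining the decomposition from Corollary \ref{C:8} for each $A_P$ with the rank equality above delivers both implications: if $\mathrm{sr}_{DG}(A)=r$, then the local decompositions yield (a) and (b), and condition (c) selects a $P$ and $t$ realizing the maximum; conversely, if (a)–(c) hold, Corollary \ref{C:8} gives $\mathrm{sr}_{DG}(A_P)\leq r$ for all $P$ and equality for some $P$, so the rank-gluing formula forces $\mathrm{sr}_{DG}(A)=r$. The main obstacle is the compression step—reducing the naive $|\pi|\cdot r$ generators coming from the individual $P$-components down to $r$ generators—which is exactly where the coprimality of maximal ideals (in the spirit of Lemma \ref{L:3}) is essential; once that is in place the rest is a packaging of Corollary \ref{C:8}.
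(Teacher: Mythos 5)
Your proof is correct and follows essentially the same route as the paper: decompose $A$ into its primary components $A_P$ (a finite set since $A$ is f.g. over $D$) and apply the single-prime result (the paper cites Corollary \ref{C:7}, you cite Corollary \ref{C:8}; both apply here) to each $A_P$. The only difference is that you explicitly justify the gluing step $\mathrm{sr}_{DG}(A)=\max_{P\in\pi}\mathrm{sr}_{DG}(A_P)$ via the Chinese Remainder Theorem compression of generators, a point the paper's one-line proof leaves implicit --- your argument there is sound, since $d_P$ is central in $DG$ and so multiplication by $d_P$ is a $DG$-endomorphism.
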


\begin{proof} Since  $A$ is  $D$-periodic, $A = \oplus_{P \in \pi}  A_P$ in which $\pi= \mathrm{Ass}_R(A)$  and  $A_P$  is a $P$-component of  $A$. Since  $A$ is a f.g. module, the set $\pi$ is finite (see \cite[Theorem 7.8]{KSS2008}). Now we can apply Corollary \ref{C:7}.
\end{proof}

\begin{proof}[Proof of the Theorem \ref{T:3}] Since  $A$ is  $D$-periodic, $A = \oplus_{P \in \pi} A_P$  where    $\pi= \mathrm{Ass}_R(A)$ and   $A_P$  is a  $P$-component of  $A$ (see  \cite[Corollary 3.8]{KSS2008}). Now using Corollary \ref{C:8} we obtain that  $A_P$  satisfies (ii), (iii) and  (iv). Conversely, suppose that  $A$ satisfies conditions (i)- (iv). If  $B$  is a f.g.   $DG$-submodule of  $A$, then  $B$  has special rank at most  $r$ by Corollary \ref{C:8}. Consequently,   $A$ has  special rank  $r$  by (iv) and Corollary \ref{C:2}.
\end{proof}
\section{Funding}
This research was supported by the UAEU UPAR Grant G00002160.

\end{document}